\theoremstyle{plain}
\newtheorem{theorem}{Theorem}[section]
\newtheorem{corollary}[theorem]{Corollary}
\newtheorem{lemma}[theorem]{Lemma}
\theoremstyle{definition}
\newtheorem{definition}[theorem]{Definition}
\newtheorem{rem}[theorem]{Remark}
\newtheorem{claim}[theorem]{Claim}
\newenvironment{eq}[1]{\begin{equation} \label{#1}}{\end{equation}}
\newcommand{\R}{\mathbb{R}}
\newcommand{\Z}{\mathbb{Z}}
\newcommand{\C}{\mathbb{C}}
\newcommand{\Q}{\mathbb{Q}}
\newcommand{\HH}{\mathit{HH}}
\newcommand{\QQ}{\mathcal{Q}}
\newcommand{\Hidden}[1]{}
\newcommand{\la}{\lambda}
\newcommand{\1}{\mathbf 1}
\DeclareMathOperator{\Det}{Det}
\DeclareMathOperator{\Tr}{Tr}
\begin{document}

\title{Perfect state transfer on graphs with a potential}
\author{Mark Kempton\footnote{Center of Mathematical Sciences and Applications, Harvard University, Cambridge MA, mkempton@cmsa.fas.harvard.edu}~~~~~Gabor Lippner\footnote{Department of Mathematics, Northeastern University, Boston MA, g.lippner@neu.edu}~~~~~Shing-Tung Yau\footnote{Department of Mathematics, Harvard University, Cambridge MA, yau@math.harvard.edu}}
\date{}

\maketitle

\begin{abstract}
In this paper we study quantum state transfer (also called quantum tunneling) on graphs when there is a potential function on the vertex set. We present two main results.  First, we show that for paths of length greater than three, there is no
potential on the vertices of the path for which perfect state transfer between the endpoints can occur.  In particular, this answers a question raised by Godsil in Section 20 of~\cite{godsil}.  Second, we show that if a graph has two vertices that share a common neighborhood, then there is a potential on the vertex set for which perfect state transfer will occur between those two vertices.  This gives numerous examples where perfect state transfer does not occur without the potential, but adding a potential makes perfect state transfer possible.  In addition, we investigate perfect state transfer on graph products, which gives further examples where perfect state transfer can occur.
\end{abstract}

\section{Introduction}
Given a graph $G$, the \emph{discrete Schr\"odinger equation} on $G$ is given by
\begin{eq}{eq:shrod}
\frac{d}{dt}\varphi_t = iH\varphi_t
\end{eq}
where $\varphi_t:V(G)\rightarrow \C$ is a function on the vertex set of $G$, and $H$ is the graph Hamiltonian. Equation (\ref{eq:shrod}) describes the evolution of the quantum state of a particle on the graph $G$ with time.  In this paper, we take $H = A-Q$ where $A$ is the adjacency of $G$, and $Q$ is a diagonal matrix whose entries represent energy at each vertex.  The matrix $Q$ is called a potential on the graph.  It is also common to take $H = \Delta - Q$ where $\Delta$ is the graph Laplacian, but since we are allowing $Q$ to be a general diagonal matrix, our results will still apply to this case.

We will be studying solutions to $(\ref{eq:shrod})$ for which $\varphi_0$ is a characteristic function for a single vertex ($\varphi_0(x) = 1$ if $x=u$ and 0 otherwise).  That is, the quantum state of the particle completely concentrated at a single vertex.
\begin{definition}
We say that there is \emph{perfect state transfer} from vertex $u$ to vertex $v$ if there is some time $T$ at which the solution to (\ref{eq:shrod}) satisfies $|\varphi_T(v)| = 1$ and $\varphi_T(x) = 0$ for $x\neq v$.  
\end{definition}
In other words, perfect state transfer occurs when a particle starts at some specific vertex $u$, and after quantum evolution for some time $T$, the quantum state of the particle is completely concentrated on a single vertex $v$.  

\begin{rem}
Equation~\eqref{eq:shrod} also arises naturally in a well-studied model of quantum communication. In this model the nodes of the graph represents a collection of spin-1/2 particles (qubits), and edges represent couplings between particles. Each particle has a ground state and an excited state, so the whole system is modeled on $(\C^2)^{\otimes n}$. The system's evolution is described by its Hamiltonian
\[ H_{XX} = \frac{1}{2}\sum_{ (i,j) \in E(G)} X_i X_j + Y_i Y_j + \sum_{i \in V(G)} Q_i \cdot Z_i ,\] where $X_i, Y_i, Z_i$ are the standard Pauli matrices. The restriction of this system to the single-excitation subspace, leads to equation \eqref{eq:shrod}. 
Perfect state transfer in this model corresponds to a starting state, where only qubit $u$ is excited, evolving to a terminal state where only qubit $v$ is excited. Such an evolution has  applications to quantum communication by providing spin networks architectures through which quantum information can be losslessly transmitted. See \cite{kay2010} for details, where certain constructions for networks with edge-weights are described that have perfect state transfer. In this paper we focus on networks with uniform couplings - that is where all the edge weights are equal and thus can be chosen to be 1.
\end{rem}

There is a rapidly growing literature studying perfect state transfer on graphs when there is no potential, i.e., in the case where $Q=0$, so that $H = A$.  See, for example, \cite{Bose2003,Cheung2011,Christandl2004,Ge2011,godsil0, godsil, godsil2} for several results in this case.  It is apparent from this literature that perfect state transfer is a rather rare phenomenon, and constructing examples when this occurs can be quite difficult.  Our goal is to determine when adding a potential to the vertices can make perfect state transfer occur.  This is also investigated in \cite{Casaccino2009} (where the potential is referred to as an \emph{energy shift}).  While we are far from a complete answer to this question, we do describe an infinite family of graphs where adding a potential makes perfect state transfer possible (and for many graphs within this family, perfect state transfer does not occur without the potential), and we show that there are graphs where adding a potential cannot help.  


\paragraph{Paths:}
It has been shown in \cite{godsil} that perfect state transfer does not occur between any vertices of any path graph, except for the endpoints of a path on two vertices or of a path on three vertices. It has been conjectured to be the case in~\cite{Casaccino2009}, and subsequently raised as a question in~\cite{godsil}, whether perfect state transfer could be induced between the endpoints of a path of arbitrary length by placing a suitable potential on the endpoints. This conjecture in~\cite{Casaccino2009} was based on numerical experiments. The authors gave specific values of potential and corresponding times at which the strength of state transfer becomes very close to perfect. Results in \cite{tunneling} implied that this conjecture is true asymptotically (as the value of potential at each endpoint goes to infinity, the probability that tunneling occurs at some time goes to 1), and later an approximate version of the conjecture was confirmed in~\cite{stolze2012}. In fact, \cite{Casaccino2009} formulated an even bolder conjecture: that any network can support perfect state transfer under a suitable potential. 

Our first main result disproves both versions of the conjecture from~\cite{Casaccino2009}.  In fact, we show in general that given any potential on a path of length 4 or more, there is no time at which perfect state transfer occurs between the endpoints.

\begin{theorem}\label{thm:main1} There is no potential $Q$ that induces perfect state transfer between the endpoints of a path of length at least 4.
\end{theorem}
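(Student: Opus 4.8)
The plan is to translate perfect state transfer into spectral conditions on the Hamiltonian $H$, which here is tridiagonal with all off-diagonal entries equal to $1$ (uniform couplings) and diagonal entries $-q_i$. Since this is a Jacobi matrix, its eigenvalues $\lambda_1<\cdots<\lambda_n$ are simple, each eigenvector $v_k$ has nonzero first and last entries, and the entries themselves are values of the associated orthogonal polynomials. Writing $e^{iTH}=\sum_k e^{iT\lambda_k}v_kv_k^{\top}$ and using that perfect state transfer from the first to the last vertex means $e^{iTH}e_1=\gamma e_n$ with $|\gamma|=1$, I would pair this identity with each $v_k$ to get $e^{iT\lambda_k}(v_k)_1=\gamma (v_k)_n$ for every $k$. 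Taking absolute values forces $|(v_k)_1|=|(v_k)_n|$ for all $k$ (strong cospectrality of the two endpoints), and the phases then give $e^{iT\lambda_k}=\gamma\,(v_k)_n/(v_k)_1$ with the ratio equal to $\pm1$.

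Two consequences drive everything. First, strong cospectrality says the spectral measures $\sum_k(v_k)_1^2\,\delta_{\lambda_k}$ and $\sum_k(v_k)_n^2\,\delta_{\lambda_k}$ at the two endpoints coincide; since a Jacobi matrix is reconstructed uniquely from the spectral measure at an endpoint, the matrix must equal its own reversal, i.e.\ the potential is palindromic, $q_i=q_{n+1-i}$. Second, by Sturm oscillation the eigenvector of the $k$-th smallest eigenvalue has $n-k$ sign changes, so (normalizing $(v_k)_1>0$) the sign of $(v_k)_n$ is $(-1)^{n-k}$ and hence $e^{iT\lambda_k}=\gamma(-1)^{n-k}$. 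Consecutive ratios give $e^{iT(\lambda_{k+1}-\lambda_k)}=-1$, so every gap is an odd multiple of $\pi/T$; thus all eigenvalues lie on one arithmetic lattice of spacing $g:=\pi/T$, with the sign $(-1)^{n-k}$ alternating along it.

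Next I would exploit the palindromic symmetry by folding: decomposing $\mathbb{R}^n$ into reflection-symmetric and antisymmetric subspaces block-diagonalizes $H$ into $H_+$ and $H_-$. A direct computation shows each block is again tridiagonal with all off-diagonals $1$, up to a single modification at the fold — for odd $n$ the antisymmetric block is an unmodified unit-off-diagonal Jacobi matrix on $\lfloor n/2\rfloor$ vertices while the symmetric block acquires one $\sqrt2$ coupling, and for even $n$ the two equal-sized blocks differ only by $\pm1$ in their last diagonal entry. The eigenvalues of $H_+$ are exactly the $\lambda_k$ with ratio $+1$ and those of $H_-$ the ones with ratio $-1$, and by interlacing they alternate. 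The lattice condition now reads: all eigenvalues of $H_+$ are mutually congruent modulo $2g$, likewise for $H_-$, and the two families are offset by an odd multiple of $g$. Equivalently $e^{iTH_\pm}$ is a scalar matrix, the two scalars being negatives of each other, so both blocks are \emph{periodic} with common period $T$ and half-period offset.

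The remaining task, and the main obstacle, is to show that two interlacing unit-off-diagonal Jacobi matrices cannot be simultaneously periodic in this way once the path has length at least $4$. The even case looks most tractable, since there $H_-=H_+-2e_se_s^{\top}$ is a rank-one corner perturbation: rescaling by $\lambda\mapsto(\lambda-A)/g$ turns the characteristic polynomials of $H_\pm$ into monic \emph{integer} polynomials, whose difference is $\tfrac{2}{g}$ times a monic polynomial, forcing $2T/\pi\in\mathbb{Z}$; the trace identities $\Tr H_+-\Tr H_-=2$ and $\Tr H_+^2-\Tr H_-^2=-4q_s$ then pin the potential onto the lattice and, with the requirement that each block have a real spectrum of the prescribed spread, should over-determine the system. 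I expect to clear the first case, length $4$ (five vertices), by explicit elimination: there the antisymmetric block is the $2\times2$ matrix $\left(\begin{smallmatrix}-q_1&1\\1&-q_2\end{smallmatrix}\right)$, whose eigenvalues differ by $\sqrt{(q_1-q_2)^2+4}\ge2$, a reality inequality that clashes with the spacing the lattice forces on the size-$3$ symmetric block. Making this uniform in $n$ is where I anticipate the real work — in particular, reconciling the unequal block sizes in the odd case with the clean rank-one argument available for even $n$, presumably via an induction or a growth estimate on the two interlacing spectra.
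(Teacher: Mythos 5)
Your setup is sound and, in fact, reproduces the paper's own framework: endpoint symmetry forces a palindromic potential, the folding into symmetric/antisymmetric blocks $H_\pm$ (the $\pm1$ corner modification for even length, the unmodified/$\sqrt2$-coupled pair for odd length) is exactly the paper's factorization lemma for the characteristic polynomial, and the Sturm-oscillation/interlacing argument correctly yields that the two interlaced spectra must lie on one arithmetic lattice with alternating type. Your rescaling even recovers the first arithmetic consequence (integrality of the rescaled spectra and of $2T/\pi$). But everything after that --- which you yourself flag as ``where I anticipate the real work'' --- is the actual content of the theorem, and the routes you sketch would not succeed. The ``over-determination'' heuristic cannot work: the lattice conditions are arithmetic (hence dense in parameter space) rather than codimension conditions, the base point and spacing of the lattice are free parameters alongside the $n$ potential values, and by the inverse spectral result the paper cites (any prescribed simple spectrum is realized by some Jacobi matrix), counting equations against unknowns cannot distinguish paths on $\geq 4$ vertices from $P_2$ and $P_3$, where PST with a potential does occur. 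Likewise the proposed five-vertex ``clash'': the gap bound $\sqrt{(q_1-q_2)^2+4}\geq 2$ has an exact analogue in $P_3$ (whose symmetric block has gap $\sqrt{q^2+8}$), yet $P_3$ admits PST; no metric inequality of this kind can produce a contradiction, since the lattice spacing is not a priori bounded.

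What is missing is precisely the number-theoretic core of the paper's proof: (i) an induction on traces of powers, $\Tr(H_+^{2k+2}-H_-^{2k+2})$, showing the potential values themselves are rational (in the odd case, rational multiples of $Q=\sum Q_i$ with $Q^2$ rational); (ii) clearing denominators, $Q_i=a_i/K$ with $(K,a_1,\dots,a_n)=1$, so that suitable integer multiples of $H_\pm$ are \emph{integer matrices} with integer eigenvalues; (iii) a parity analysis of those integer eigenvalues, including a mod 8 step using $s^3\equiv s$ for odd $s$ and a determinant recursion $D_{j+2}=a_{j+2}D_{j+1}-K^2D_j$ controlling the 2-adic structure; and (iv) the final contradiction by reducing the characteristic polynomial mod 2: all eigenvalues even forces it to equal $x^n$ over $F_2$, while the matrix reduced mod 2 is diagonal with entries $a_i$, forcing every $a_i$ to be even and contradicting coprimality. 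Note that your normalization makes only the \emph{spectra} integral, not the matrices, so this mod 2 reduction --- the paper's key innovation, i.e., putting the odd/even (parity) condition of the spectral characterization to work --- is unavailable in your setup. Without a replacement for steps (i)--(iv), your proposal is a correct reduction to the known spectral conditions, not a proof of the theorem.
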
 

A spectral characterization for perfect state transfer has long been known (see e.g.~\cite{kay2010}).  It consists of a symmetry condition on the eigenvectors and a rationality and parity condition on the eigenvalues (see Lemma \ref{lem:eig} and Corollary \ref{cor:rational} below, where we reproduce the characterization for the readers convenience). 

The symmetry and rationality conditions have been used throughout the literature to prove impossibility of perfect state transfer, but, to our knowledge, the parity condition hasn't yet been put to work. Our main contribution is exploiting the parity condition in a meaningful way - through a mod 2 reduction of matrices and eigenvalues.
 
Theorem~\ref{thm:main1} is also interesting in light of the result in~\cite{friedland} which states that any $n$ distinct real numbers can be achieved as the spectrum of a symmetric Jacobi (tridiagonal) matrix.  A natural question to ask to extend this result is if it is possible to achieve any spectrum by varying only the main diagonal entries, and holding all other entries fixed.  Our result can then be interpreted as saying that there are sets of eigenvalues (particularly, those needed as a necessary condition for perfect state transfer) that a Jacobi matrix cannot achieve if we fix the off-diagonal entries at 1.  In other words, varying only the diagonal entries of a symmetric Jacobi matrix does not give enough freedom to achieve any possible set of eigenvalues.

\paragraph{Nodes with identical neighborhoods:}
Our other main result is that if a graph has two vertices, $u$ and $v$, that share the same neighborhood, then there exists a potential on the vertex set of the graph for which perfect state transfer will occur between $u$ and $v$ at some time.  This provides infinitely many graphs where perfect state transfer can occur with some potential.  For many of these graphs, perfect state transfer does not occur without potential (consider stars, for example).  Our proof again uses a rationality condition on the eigenvalues.  In this case, we prove that having freedom on the diagonal gives enough genericity to achieve the eigenvalues required.  In \cite{Casaccino2009}, it is shown that for the complete graph, and the complete graph missing one edge, there is a choice of a potential on the vertices that induces perfect state transfer.  Our result is a far reaching generalization of both of these cases. 

\begin{theorem}\label{thm:nbhd}
Let $G$ be a graph on $n+2$ vertices with two vertices $u$ and $v$ that share the same neighbors, and such that $u\not\sim v$. There is a potential $Q: V(G) \to \R$ for which there is perfect state transfer between $u$ and $v$.
\end{theorem}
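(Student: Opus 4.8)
The plan is to exploit the symmetry created by two twin vertices $u$ and $v$. Since $u$ and $v$ have the same neighborhood $N$ and are non-adjacent, the swap $\tau$ that exchanges $u \leftrightarrow v$ and fixes everything else is an automorphism of $G$. Provided the potential $Q$ is chosen to respect this automorphism (i.e.\ $Q_u = Q_v$, with $Q$ arbitrary on the remaining vertices), the Hamiltonian $H = A - Q$ commutes with the permutation matrix of $\tau$. First I would set up the decomposition of $\C^{V(G)}$ into the $+1$ and $-1$ eigenspaces of $\tau$. The key observation is that the antisymmetric vector $w = \tfrac{1}{\sqrt 2}(e_u - e_v)$ is always an eigenvector of $H$: since $u,v$ are non-adjacent twins, $H w = \la_- w$ for the explicit eigenvalue $\la_- = -Q_u$ (the only nonzero entries of $A$ in rows $u,v$ point into $N$, and these cancel in $e_u - e_v$). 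Meanwhile every other eigenvector lying in the $-1$ eigenspace of $\tau$, and all eigenvectors in the $+1$ eigenspace, must be orthogonal to $w$.

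Next I would invoke the spectral characterization reproduced as Lemma~\ref{lem:eig} and Corollary~\ref{cor:rational}. Perfect state transfer between $u$ and $v$ requires exactly that the eigenvectors be symmetric/antisymmetric with respect to the $u$--$v$ swap (the symmetry condition), and that the eigenvalues on which the initial state $e_u$ has support satisfy the rationality-and-parity condition. Because $e_u = \tfrac{1}{\sqrt2}\big(\tfrac{1}{\sqrt2}(e_u+e_v)\big) + \tfrac{1}{\sqrt2}\, w$ splits into a symmetric part supported on the $+1$ eigenspace and the single antisymmetric vector $w$, the support of $e_u$ meets the spectrum only at $\la_-$ together with those symmetric eigenvalues $\{\mu_j\}$ arising from the $+1$ eigenspace. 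The symmetry condition is then automatic from the twin structure, so the entire problem collapses to arranging the eigenvalue condition.

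The crux is therefore a genericity/realizability argument: I must choose the diagonal potential so that $\la_- = -Q_u$ and the relevant symmetric eigenvalues $\mu_1,\dots,\mu_k$ satisfy the parity condition of Corollary~\ref{cor:rational} — concretely, that there is a real number $\rho$ and integers making $(\la_- - \mu_j)/\rho$ odd for each $j$ (equivalently, $\la_-$ sits an odd multiple of a common spacing away from each $\mu_j$, while the $\mu_j - \mu_\ell$ differences are even multiples). The strategy is to use the free diagonal entries as tuning parameters. I would first fix $Q$ on $N$ and the remaining vertices to pin down the symmetric spectrum $\{\mu_j\}$ (or rather, show these vary smoothly), then treat $Q_u = Q_v =: q$ as a single handle that moves $\la_- = -q$ freely across the real line without disturbing the $\mu_j$, since $w$ is decoupled from the symmetric subspace. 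Sliding $-q$ lets me hit any target value, so I can place $\la_-$ at a point satisfying the odd-spacing requirement relative to the fixed $\{\mu_j\}$.

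The main obstacle I anticipate is the arithmetic compatibility of all the $\mu_j$ simultaneously: having a single parameter $q$ control only $\la_-$ is not enough if the $\mu_j$ themselves are not already in the required rational/parity relationship with one another. So the real work is showing that the potential on $N$ and on the rest of $G$ provides enough independent freedom to force the symmetric eigenvalues into a configuration where some common difference $\rho$ makes all the pairwise gaps even integers — after which moving $q$ handles the odd-offset for $\la_-$. I would establish this by a dimension-counting or implicit-function-type argument showing the eigenvalue map from the diagonal to $\R^k$ is a submersion (generically full rank), so that rational target spectra satisfying the parity constraints are attainable; the non-adjacency $u \not\sim v$ is what guarantees the clean splitting that makes $\la_-$ independently adjustable and hence makes this reduction go through.
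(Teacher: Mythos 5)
Your opening moves match the paper's: with $Q(u)=Q(v)$ the twin structure makes $w=\tfrac{1}{\sqrt2}(e_u-e_v)$ an eigenvector (eigenvalue $-Q_u$), all other eigenvectors are symmetric in $u,v$, so condition 1 of Lemma~\ref{lem:eig} is automatic and the problem reduces to arranging the parity/rationality condition on the symmetric eigenvalues. However, there is a concrete error in your reduction: you claim $q=Q_u=Q_v$ slides $\la_-=-q$ ``without disturbing the $\mu_j$, since $w$ is decoupled from the symmetric subspace.'' Decoupling of $w$ does not make the symmetric block independent of $q$: that block contains the vector $\tfrac{1}{\sqrt2}(e_u+e_v)$, whose diagonal entry is $-q$, so its trace is $-q-\sum_x Q_x$ and every $\mu_j$ moves as $q$ moves. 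There is no independent handle for $\la_-$; the only legitimate normalization is a global diagonal shift (the paper shifts so that $Q(u)=Q(v)=0$, pinning $\la_-=0$), after which the free parameters are exactly the $n$ potential values on $V(G)\setminus\{u,v\}$ and one must satisfy $n$ ratio conditions on the $n+1$ symmetric eigenvalues.

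The deeper problem is that the step you defer --- ``a dimension-counting or implicit-function-type argument showing the eigenvalue map from the diagonal to $\R^k$ is a submersion (generically full rank)'' --- is precisely the crux of the theorem, and dimension counting does not prove it: having $n$ parameters and $n$ conditions is necessary but says nothing about the rank of the Jacobian, which is a polynomial in the $Q_i$ that could in principle vanish identically. This is exactly Lemma~\ref{lem:ratios} in the paper, and its proof occupies essentially all of Section 4: one writes $\partial_{Q_j}\lambda_i$ via the characteristic polynomial (Claim~\ref{cl:implicit}), reduces invertibility of $D\Phi$ for $\Phi(Q)=(\lambda_1/\lambda_{n+1},\dots,\lambda_n/\lambda_{n+1})$ to non-vanishing of a determinant $\det M$, factors out the Vandermonde $\Lambda=\prod_{i<k}(\lambda_k-\lambda_i)$, substitutes the symmetric functions of the $\lambda_i$ by coefficients of $F(x,Q)$ to get a polynomial $T(Q)$, and then shows $T\not\equiv 0$ by a delicate leading-term analysis: the top homogeneous parts $T^0,T^1$ vanish, and the first nontrivial part decomposes as $T^2_G=\sum_{a\in W}T^2_{G_a}$ over single-common-neighbor graphs $G_a$, for which everything is explicitly diagonalizable and $T^2_{G_a}$ is computed in closed form to be nonzero. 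Without an argument of this kind (or some substitute establishing full rank at one potential), your proposal establishes only the easy half of the theorem --- the symmetry reduction --- and asserts, rather than proves, the genericity statement on which everything rests. Once full rank is known at some good potential, your final step is the same as the paper's: odd/odd rational tuples are dense, so the inverse function theorem produces a potential whose eigenvalue ratios are odd/odd, and $t=\pi(2q+1)/\lambda_{n+1}$ gives perfect state transfer.
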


Theorem~\ref{thm:nbhd} is also interesting in light of the result of \cite{godsil2}, which states that, without potential, there are only finitely many connected graphs of maximum degree $k$ in which perfect state transfer can occur, for any given $k$. Our result shows that this is not true when we allow a potential.  Indeed, given any maximum degree $k$, simply choose any graph on any number of vertices with maximum degree $k-2$, and then add two vertices, attaching each of them to the same vertices (attaching them to at most $k$ vertices).  Then the resulting graph has maximum degree $k$, and perfect state transfer occurs in the graph.  Clearly there are infinitely many connected graphs for which we can do this.

\paragraph{Graph products:} In our final section, we will show that if we have two graphs in which perfect state transfer occurs at the same time with some potential, then there is a potential that we can put on the cartesian product of the two graphs for which perfect state transfer will also occur.  From this, we can construct more examples of graphs with potential where perfect state transfer occurs.  In particular, taking products can produce such examples that do not satisfy this condition that two vertices have the same neighborhood.
\section{Preliminaries}

Given a graph $G$ with $n$ vertices let $H = A - Q$ denote the graph Hamiltonian, where $A$ is the adjacency matrix, and $Q=diag(Q_1,\cdots,Q_n)$ a diagonal matrix with real entries.  Let $\varphi_0:V(G) \rightarrow \C$ be a complex-valued function on the vertex set of $G$ satisfying $||\varphi_0||_2 = 1$.  In \cite{Casaccino2009}, it is described how the adjacency matrix describes the ``XY" interaction of $n$ spin-1/2 quantum particles.  Using the adjacency matrix specifically gives the ``XY" Hamiltonian.  Other Hamiltonians can be given, particularly by using the graph Laplacian.

Define 
\[ \varphi_t(x) = e^{itH}\varphi_0(x)\]
and observe that $\varphi_t$ is a solution of (\ref{eq:shrod}).  We will denote $U(t) = e^{itH}$.  Note that the exponential of the matrix is given by 
\[
U(t) = e^{itH} = \sum_\lambda e^{it\lambda}xx^T
\]
where the sum is taken over eigenvalues $\lambda$ of $H$ and $v$ is the corresponding unit eigenvector.  In particular, note that 
\begin{eq}{eq:u0}
I = U(0) = \sum_\lambda xx^T.
\end{eq}
In addition, since $H$ is symmetric, each $\lambda$ above is real, and each $x$ can be assumed to have all real entries.  Observe also that $U(t)$ is a unitary matrix for all $t$, and therefore $||\varphi_t||^2 =  1$ for all $t$. If we let $\1_u$ denote the indicator vector for vertex $u$, then it is evident that perfect state transfer from $u$ to $v$, defined above, occurs at time $T$ if
\[
U(T)\1_u = \gamma\1_v
\]
for some $\gamma\in\C$ with $|\gamma|=1$. Then clearly, perfect state transfer from $u$ to $v$ occurs at time $T$ if and only if
\[|U(T)_{u,v}| = 1.\]

Versions of the following lemma and corollary are used throughout the literature on perfect state transfer.  See for example \cite{godsil0}.  We give a proof for completeness.

\begin{lemma}\label{lem:eig}
Let $u,v$ be vertices of $G$, and $H$ the Hamiltonian.  Then perfect state transfer from $u$ to $v$ occurs at some time if and only if the following two conditions are satisfied:
\begin{enumerate}
\item Every eigenvector $x$ of $H$ satisfies either $x(u) = x(v)$ or $x(u) = -x(v)$.
\item If $\{\lambda_i\}$ are the eigenvalues for eigenvectors with $x(u) = x(v)$, and $\{\mu_j\}$ are the eigenvalues for the eigenvectors with $x(u) = -x(v)$, and $x(u)$ and $x(v)$ are non-zero, then there exists some time $T$ such that
\[
e^{iT\lambda_1} = e^{iT\lambda_i} = -e^{iT\mu_j}
\]
for all $i,j$.
\end{enumerate}
\end{lemma}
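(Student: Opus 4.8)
The plan is to pass to an orthonormal real eigenbasis and translate the statement $U(T)\mathbf 1_u=\gamma\mathbf 1_v$ into a collection of scalar equations, one per basis vector. First I would record the reformulation already noted in the text: since $U(T)$ is unitary, the column $U(T)\mathbf 1_u$ is a unit vector, so $|U(T)_{u,v}|=1$ forces every other entry of that column to vanish; hence perfect state transfer at time $T$ is equivalent to $U(T)\mathbf 1_u=\gamma\mathbf 1_v$ for some $\gamma$ with $|\gamma|=1$. Fix an orthonormal basis $x_1,\dots,x_n$ of real eigenvectors, with $Hx_k=\theta_k x_k$, and expand $\mathbf 1_u=\sum_k x_k(u)\,x_k$ and $\mathbf 1_v=\sum_k x_k(v)\,x_k$. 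Applying $U(T)=\sum_k e^{iT\theta_k}x_kx_k^T$ and comparing coordinates in this basis, the transfer condition becomes exactly
\[ e^{iT\theta_k}\,x_k(u)=\gamma\,x_k(v)\qquad\text{for every }k. \]
This single family of equations is the engine of the whole proof.

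For the forward direction I would extract conditions 1 and 2 from that family. Taking absolute values gives $|x_k(u)|=|x_k(v)|$ for all $k$. To get the sharper sign statement I would work one eigenspace at a time: if $\theta_k=\theta$ is fixed, the equations read $x_k(v)=(e^{iT\theta}/\gamma)\,x_k(u)$, and since both coordinates are real the common factor $c_\theta:=e^{iT\theta}/\gamma$ must be real whenever some $x_k(u)\neq 0$, while $|c_\theta|=1$; hence $c_\theta=\pm1$. Because $x\mapsto x(v)-c_\theta x(u)$ is linear and vanishes on a basis of the eigenspace, it vanishes on the whole eigenspace, which yields condition 1 for \emph{every} eigenvector, not merely the chosen basis. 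Sorting the eigenvalues according to the sign $c_\theta$ then gives $e^{iT\lambda_i}=\gamma$ on the $+1$ eigenspaces and $e^{iT\mu_j}=-\gamma$ on the $-1$ eigenspaces, which is precisely condition 2.

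For the converse I would reverse this bookkeeping: given conditions 1 and 2, set $\gamma=e^{iT\lambda_1}$ and verify $e^{iT\theta_k}x_k(u)=\gamma x_k(v)$ for each $k$ by cases---trivially when $x_k(u)=x_k(v)=0$, via $e^{iT\lambda_i}=\gamma$ when $x_k(u)=x_k(v)\neq0$, and via $e^{iT\mu_j}=-\gamma$ when $x_k(u)=-x_k(v)\neq0$---so that $U(T)\mathbf 1_u=\gamma\mathbf 1_v$ with $|\gamma|=1$. The only genuinely delicate point is the multiplicity bookkeeping in the forward direction: one must argue that the sign is constant across an entire eigenspace and promote the basis-level identity to all eigenvectors, and one must treat separately the coordinates with $x_k(u)=x_k(v)=0$, which are automatically consistent and explain why condition 2 constrains only the eigenvalues attached to eigenvectors that are nonzero at $u$ and $v$.
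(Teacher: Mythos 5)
Your proposal is correct and takes essentially the same approach as the paper: both pass to a real orthonormal eigenbasis and reduce perfect state transfer at time $T$ to the per-eigenvector equations $e^{iT\lambda_k}x_k(u)=\gamma\,x_k(v)$. The only differences are refinements in your favor: you extract condition 2 directly from these coordinate equations, whereas the paper uses a separate phase-alignment argument on the modulus-one sum $U(T)_{u,v}=\sum_{\lambda_i}e^{iT\lambda_i}x_i(u)^2-\sum_{\mu_j}e^{iT\mu_j}x_j(u)^2$, and you handle degenerate eigenvalues explicitly (constancy of the sign $c_\theta$ on each eigenspace, then linearity to pass from the chosen basis to all eigenvectors), a point the paper leaves implicit by arguing only about the basis vectors $x_k$.
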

\begin{proof}
Let $\{x_i\}$ be a set of orthonormal eigenvectors of $H$, and $\{\lambda_i\}$ the corresponding eigenvalues.  Then we can write
\[
U(t) = \sum_{i=1}^n e^{it\lambda_i}x_ix_i^T.
\]

($\impliedby$) Since
\[
U(t)_{u,v} = \sum_{i=1}^n e^{it\lambda_i}x_i(u)x_i(v)
\]
then if conditions 1 and 2 are satisfied, we get
\[
U(T)_{u,v} = e^{iT\lambda_1}\sum_{i=1}^n x_i(u)^2 = e^{iT\lambda_1}
\]
by (\ref{eq:u0}).  Therefore $|U(T)_{u,v}|=1$, so perfect state transfer occurs from $u$ to $v$ at time $T$. 

($\implies$) Assuming perfect state transfer occurs from $u$ to $v$ at time $T$, we have
$U(T)\1_u = \gamma\1_v$ and hence $x_k^TU(T)\1_u = \gamma x_k^T\1_v$ for any $k$, and therefore, by the above, since the $x_i$ are orthonormal,
\[
e^{iT\lambda_k}x_k^T\1_u = \gamma x_k^T\1_v
\]
which implies that
\[
x_k(u) = e^{-iT\lambda_k}\gamma x_k(v).
\]
Since $|e^{-iT\lambda_k}\gamma| = 1$, and the $x_i$ are real vectors, we see that $e^{-iT\lambda_k}\gamma = \pm1$, which gives condition 1.  

Now, relabel the eigenvalues so that $\lambda_i$ and $\mu_j$ are as in the statement of condition 2.  Then we have
\[
U(T)_{u,v} = \sum_{\lambda_i}e^{iT\lambda_i}x_i(u)^2 - \sum_{\mu_j}e^{iT\mu_j}x_j(u)^2.
\]
By assumption, we have that $|U(T)_{u,v}|=1$. By (\ref{eq:u0}), we have that $\sum x_i(u)^2 = 1$, then the only way for this sum to have modulus 1 is for the phase of each of the $e^{iT\lambda_i}$ for which $x_i(u)\neq0$ to line up to point in the same direction, and each of the and $e^{iT\mu_j}$ for which $x_j(u)\neq0$ to line up to point in the opposite direction.  This gives condition 2.
 
\end{proof}
\begin{corollary}\label{cor:rational}
Using the notation of Lemma \ref{lem:eig}, if perfect state transfer occurs from $u$ to $v$, then
\[
\frac{\lambda_i - \lambda_j}{\lambda_k - \lambda_\ell} \in \Q
\]
and 
\[
\frac{\lambda_i - \mu_j}{\lambda_k - \lambda_\ell} = \frac{odd}{even}
\]
for all $i,j,k,\ell$.
\end{corollary}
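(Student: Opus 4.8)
The plan is to read Corollary~\ref{cor:rational} off directly from condition~2 of Lemma~\ref{lem:eig}, by converting the multiplicative phase equalities into additive congruences. So I would assume perfect state transfer from $u$ to $v$ and let $T$ be a transfer time furnished by the lemma; since $U(0)=I$ cannot transfer between distinct vertices, $T \neq 0$. The equalities $e^{iT\lambda_1} = e^{iT\lambda_i}$ say that $e^{iT(\lambda_i - \lambda_1)} = 1$, so I would record that
\[ T(\lambda_i - \lambda_1) = 2\pi a_i \]
for some integer $a_i$, while the equalities $e^{iT\lambda_1} = -e^{iT\mu_j}$ give $e^{iT(\mu_j - \lambda_1)} = -1$, hence
\[ T(\mu_j - \lambda_1) = (2b_j + 1)\pi \]
for some integer $b_j$. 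These two families of identities are the only facts I need.

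Subtracting, every difference among the $\lambda$'s is an integer multiple of $2\pi/T$, since $T(\lambda_i - \lambda_j) = 2\pi(a_i - a_j)$, while every mixed difference is an odd multiple of $\pi/T$, since $T(\lambda_i - \mu_j) = 2\pi a_i - (2b_j+1)\pi = (2a_i - 2b_j - 1)\pi$. I would then simply form the ratios, in which the unknown $T$ cancels. For the first claim, whenever $\lambda_k \neq \lambda_\ell$,
\[ \frac{\lambda_i - \lambda_j}{\lambda_k - \lambda_\ell} = \frac{a_i - a_j}{a_k - a_\ell} \in \Q, \]
and for the second,
\[ \frac{\lambda_i - \mu_j}{\lambda_k - \lambda_\ell} = \frac{(2a_i - 2b_j - 1)\pi/T}{2\pi(a_k - a_\ell)/T} = \frac{2a_i - 2b_j - 1}{2(a_k - a_\ell)}, \]
whose numerator is odd and whose denominator is even, which is exactly the asserted form.

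The calculation itself is routine; the only point I expect to require care is interpreting the conclusion $\frac{odd}{even}$ correctly and checking that it is well posed. I would note that $T \neq 0$ and $\lambda_k \neq \lambda_\ell$ are needed so that no denominator vanishes, and that the odd/even property is stable under reducing the fraction to lowest terms: because the numerator $2a_i - 2b_j - 1$ is odd, it shares no factor of $2$ with the denominator, so any cancellation leaves the single factor of $2$ that forces the denominator to remain even. Equivalently, this ratio has negative $2$-adic valuation. This is the one place where the \emph{parity} hypothesis, rather than mere rationality, is genuinely used, and it is precisely the feature the paper later exploits through its mod~$2$ reduction.
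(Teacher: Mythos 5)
Your proof is correct and is exactly the intended argument: the paper states Corollary~\ref{cor:rational} without proof as an immediate consequence of condition~2 of Lemma~\ref{lem:eig}, and your derivation (converting the phase equalities $e^{iT\lambda_1}=e^{iT\lambda_i}=-e^{iT\mu_j}$ into the congruences $T(\lambda_i-\lambda_j)\in 2\pi\Z$ and $T(\lambda_i-\mu_j)\in \pi(2\Z+1)$, then cancelling $T$ in the ratios) is the standard way to fill in that step. Your added remarks on $T\neq 0$, on $\lambda_k\neq\lambda_\ell$, and on the stability of the odd/even form under reduction are correct and welcome care.
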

Here, $odd/even$ is used to indicate a rational number whose numerator is odd and denominator is even.  Also, note that the statement of the corollary with the role of $\mu$ and $\lambda$ reversed is clearly true as well.

One further observation to make here is that adding any multiple of the identity to $H$ simply shifts each eigenvalue by the same amount, keeping the same eigenvectors.  Therefore this does not affect the conditions of Lemma \ref{lem:eig}.  Therefore, the potential can be scaled by any constant shift at each vertex, and this will not affect whether or not perfect state transfer is possible, or the time at which it occurs.

\section{Potential on paths}

In this section, we will be investigating state transfer on paths.  It is known from \cite{godsil} that without a potential, perfect state transfer can only occur on a path of length 2 or 3, and not for longer paths.  As mentioned in the introduction, it was conjectured in \cite{godsil} that potential could be put on the endpoints of longer paths to make perfect state transfer occur.  We will show that this is false, and indeed, perfect state transfer on a path of length greater than 3 is impossible with any  potential on the path.  

\begin{rem} If there is perfect state transfer on a path, then by Lemma 2 of~\cite{kay2010} the potential has to be symmetric around the center of the path. Henceforth we will restrict to such symmetric potentials.
\end{rem}

Our first result will be to completely characterize potentials on $P_3$, the path of length 3, for which perfect state transfer occurs.  We will then prove Theorem~\ref{thm:main1} in two parts (see Theorems~\ref{thm:evenpath} and~\ref{thm:oddpath}, treating odd and even length paths separately. 

We remark that for the path of length 2, $P_2$, it is easy to see that perfect state transfer will occur with any symmetric potential, since a symmetric potential is constant, so simply translates the potential by a constant amount.  It is also easy to see that if the potential is not symmetric (in this case, meaning the values on the two vertices are distinct), then the eigenvectors will not have the form required from Lemma \ref{lem:eig}, and so perfect state transfer is impossible.

We will begin with some observations concerning the characteristic polynomials of paths. Let 
\[p_n(x;Q_1,Q_2,...,Q_n) = \det (x I_n - H_n)\] be the characteristic polynomial of the Hamiltonian, $H_n = A_n +  \cdot diag(Q_1,Q_2,\dots,Q_n)$, of the $n$-vertex path with potential at each vertex. Let $L_{2n} = p_{2n}(x;Q_1,...Q_n,Q_n,...,Q_1)$ be the characteristic polynomial for the Hamiltonian with symmetric potential, and likewise, $L_{2n+1} = p_{2n+1}(x;Q_1,...,Q_n,Q_{n+1},Q_n,...,Q_1)$.

\begin{lemma}\label{lem:Ln}
We have the following identities for $p_n$ and $L_n$:
\begin{align}
p_n(x;Q_1,...,Q_n) &= (x-Q_n)\cdot p_{n-1}(x;Q_1,...,Q_{n-1}) - p_{n-2}(x;Q_1,...,Q_{n-2})\\
L_{2n} &= (p_n + p_{n-1})(p_n - p_{n-1}) \\ 
L_{2n+1} &= p_n(p_{n+1} - p_{n-1})
\end{align}

Furthermore, in each of the above factorizations, the roots of the first factor correspond to eigenvectors $f$ for which $f(1) = -f(n)\neq0$, and the roots in the second factor correspond to eigenvectors $f$ with $f(1) = f(n)\neq0$.  
\end{lemma}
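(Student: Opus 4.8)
The plan is to prove the three-term recurrence first by a cofactor expansion, and then to derive both factorizations from a single observation: a palindromic potential makes the path Hamiltonian commute with the reflection of the path, so $H$ block-diagonalizes into a symmetric and an antisymmetric part, each of which is itself a path Hamiltonian. For the recurrence, I would note that $xI_n - H_n$ is tridiagonal with diagonal entries $x - Q_i$ and off-diagonal entries $-1$. Expanding $p_n = \det(xI_n - H_n)$ along the last row gives $(x-Q_n)$ times the leading principal minor of order $n-1$, which is $p_{n-1}$, minus the product of the two off-diagonal $-1$ entries times the minor of order $n-2$, which is $p_{n-2}$; this yields the stated recurrence.

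For the factorizations, let $N$ denote the number of vertices and let $R$ be the permutation reversing $1,\dots,N$. Since the potential is palindromic, $RH = HR$, so $H$ preserves the symmetric subspace $\{f : Rf = f\}$ and the antisymmetric subspace $\{f : Rf = -f\}$. Writing $H$ in the orthonormal basis $s_i = (e_i + e_{N+1-i})/\sqrt2$ and $a_i = (e_i - e_{N+1-i})/\sqrt2$, a direct computation of $Hs_i$ and $Ha_i$ shows each block is tridiagonal with off-diagonal entry $1$, the only subtlety occurring at the vertices nearest the center. For $N = 2n$ the symmetric block is the $n$-vertex path Hamiltonian with potential $(Q_1,\dots,Q_{n-1},Q_n+1)$ and the antisymmetric block is the one with potential $(Q_1,\dots,Q_{n-1},Q_n-1)$; feeding these into the recurrence collapses the modified last diagonal entry, producing characteristic polynomials $p_n - p_{n-1}$ and $p_n + p_{n-1}$, whose product is $L_{2n}$.

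The odd case is where I expect the main difficulty. For $N = 2n+1$ the center vertex $n+1$ is fixed by $R$, so it lies only in the symmetric subspace and the folding is no longer clean. The antisymmetric block nonetheless turns out to be exactly the $n$-vertex path with potential $(Q_1,\dots,Q_n)$, giving $p_n$. The symmetric block, however, is an $(n+1)\times(n+1)$ path-like matrix whose coupling between $s_n$ and the center $e_{n+1}$ has magnitude $\sqrt2$; expanding its characteristic determinant along the center row gives $(x - Q_{n+1})p_n - 2p_{n-1}$, and the extra factor $2 = (\sqrt2)^2$ is precisely what must be reconciled with the recurrence $(x-Q_{n+1})p_n = p_{n+1} + p_{n-1}$ in order to obtain $p_{n+1} - p_{n-1}$. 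Getting this weight and the center bookkeeping right is the crux of the argument; with it in hand, $L_{2n+1} = p_n(p_{n+1}-p_{n-1})$.

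Finally, the eigenvector statement follows from identifying each factor with one of the two blocks. In both factorizations the first factor is the antisymmetric block, so its eigenvectors $f$ satisfy $Rf = -f$ and hence $f(1) = -f(N)$, while the second factor is the symmetric block, giving $f(1) = f(N)$ (here $f(N)$ is the value at the far endpoint, written $f(n)$ in the statement). To see these values are nonzero, I would use that each block is a Jacobi matrix with nonzero off-diagonal entries: its eigenvector equation is a three-term recurrence in which a vanishing first coordinate forces all coordinates to vanish, so every eigenvector has nonzero first coordinate, and translating back through the change of basis yields $f(1) = \pm f(N) \neq 0$.
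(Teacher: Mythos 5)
Your proposal is correct and follows essentially the same route as the paper: both fold the palindromic path into its symmetric and antisymmetric parts, identify each part as a path-type Jacobi matrix, and evaluate the characteristic polynomials of these blocks via the three-term recurrence, with the nonvanishing of $f(1)$ obtained from the same ``zero first coordinate forces the whole eigenvector to vanish'' argument. The only difference is cosmetic: the paper lifts eigenvectors of unnormalized folded matrices (so its odd-case symmetric block has an off-diagonal entry $2$ rather than your $\sqrt{2}$), which yields the same characteristic polynomial $(x-Q_{n+1})p_n - 2p_{n-1} = p_{n+1}-p_{n-1}$ since the two matrices are conjugate by a diagonal similarity.
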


\begin{proof}
The first follows from direct expansion of the determinant, expanding along the last row.

For the second, suppose we have
\begin{eq}{submatrix}
\begin{bmatrix}
Q_1&1&0& &&\\1&Q_2&1&\ &&\\0&1&Q_3&&&\\&&&\ddots&\\&&& &Q_{n-1}&1\\&&&&1&Q_{n}+1
\end{bmatrix}\begin{bmatrix}
a_1\\a_2\\a_3\\\vdots\\a_{n-1}\\a_n
\end{bmatrix} = \lambda\begin{bmatrix}
a_1\\a_2\\a_3\\\vdots\\a_{n-1}\\a_n
\end{bmatrix}.
\end{eq}
For ease of notation, let $a = (a_1,...,a_n)^T$ and we will use $\bar a$ to denote the ``reversal" of $a$, that is, $\bar a = (a_n,...,a_1)^T$.
Then direct computation shows that
\[
H_n\begin{bmatrix}a\\ \bar a
\end{bmatrix} = \lambda\begin{bmatrix}
a\\ \bar a
\end{bmatrix}.
\]
In addition, the characteristic equation of the matrix on the left in (\ref{submatrix}) is 
$p_n - p_{n-1}$.

In a similar manner, if we have
\begin{eq}{submatrix2}
\begin{bmatrix}
Q_1&1&0& &&\\1&Q_2&1&\ &&\\0&1&Q_3&&&\\&&&\ddots&\\&&& &Q_{n-1}&1\\&&&&1&Q_n-1
\end{bmatrix}\begin{bmatrix}
a_1\\a_2\\a_3\\\vdots\\a_{n-1}\\a_n
\end{bmatrix} = \lambda\begin{bmatrix}
a_1\\a_2\\a_3\\\vdots\\a_{n-1}\\a_n
\end{bmatrix}
\end{eq}
then
\[
H_n\begin{bmatrix}
a\\-\bar a
\end{bmatrix} = \lambda\begin{bmatrix}
a\\-\bar a
\end{bmatrix}.
\]
and the characteristic equation of the matrix on the left in (\ref{submatrix2}) is 
$p_n + p_{n-1}$. This gives the second equation of the lemma.

For $L_{2n+1}$, if we have 
\begin{eq}{submatrix3}
\begin{bmatrix}
Q_1&1&0& &&\\1&Q_2&1&\ &&\\0&1&Q_3&&&\\&&&\ddots&\\&&& &Q_n&1\\&&&&2&Q_{n+1}
\end{bmatrix}\begin{bmatrix}
a_1\\a_2\\a_3\\\vdots\\a_{n-1}\\a_n\\a_{n+1}
\end{bmatrix} = \lambda\begin{bmatrix}
a_1\\a_2\\a_3\\\vdots\\a_{n-1}\\a_n\\a_{n+1}
\end{bmatrix}
\end{eq}
then 
\[
H_n\begin{bmatrix}
a\\a_{n+1}\\ \bar a
\end{bmatrix} = \lambda\begin{bmatrix}
a\\a_{n+1}\\ \bar a
\end{bmatrix}.
\]
The characteristic equation of the matrix on the left in (\ref{submatrix3}) is 
$p_{n+1} - p_{n-1}$.

Finally, if we have
\begin{eq}{submatrix4}
\begin{bmatrix}
Q_1&1&0& &&\\1&Q_2&1&\ &&\\0&1&Q_3&&&\\&&&\ddots&\\&&& &Q_{n-1}&1\\&&&&1&Q_n
\end{bmatrix}\begin{bmatrix}
a_1\\a_2\\a_3\\\vdots\\a_{n-1}\\a_n
\end{bmatrix} = \lambda\begin{bmatrix}
a_1\\a_2\\a_3\\\vdots\\a_{n-1}\\a_n
\end{bmatrix}
\end{eq}
then
\[
H_n\begin{bmatrix}
a\\0\\-\bar a
\end{bmatrix} = \lambda\begin{bmatrix}
a\\0\\-\bar a
\end{bmatrix}.
\]
and the characteristic equation of the matrix on the left in (\ref{submatrix4}) is 
$p_n$.

Finally, it is clear from above that the eigenvectors we found satisfy $f(1)=f(n)$ or $f(1)=-f(n)$ respectively.  Furthermore, straightforward computation shows that if $f(1)=0$, then $f(x)=x$ for all $x$, thus $f(1)$ and $f(n)$ are non-zero.
This gives the lemma.

\end{proof}

\subsection{State transfer on $P_3$.}

In this section we will characterize every possible potential on $P_3$ for which perfect state transfer can occur, up to scaling by an additive constant.   Note that it is clear that if we have an asymmetric potential on $P_3$, then the eigenvectors will not have the required form of Lemma \ref{lem:eig}, so we assume that we have equal potential at each endpoint.   By adding a multiple of the identity, we can assume, without loss of generality, that this value is 0, so we will simply assume the potential on the midpoint is $Q$.

\begin{theorem}
Let $G = P_3$, the path on 3 vertices, let $u,v$ be the endpoints of the path, and let $Q = diag(0,q,0)$.  Then there is perfect state transfer from $u$ to $v$ if and only if there exist integers $k$ and $\ell$ of opposite parity such that
\[
(k^2-\ell^2)q^2 = 8\ell^2.
\]
When this is the case, perfect state transfer occurs at time
\[
t =\frac{2\pi k}{\sqrt{q^2+8}}.
\]
\end{theorem}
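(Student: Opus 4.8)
The plan is to run the spectral characterization of Lemma~\ref{lem:eig} against the explicit spectrum of the $3\times 3$ Hamiltonian. First I would write down
\[
H_3 = \begin{bmatrix} 0 & 1 & 0 \\ 1 & q & 1 \\ 0 & 1 & 0 \end{bmatrix}
\]
and compute its characteristic polynomial directly (equivalently, specialize $L_3 = p_1(p_2 - p_0)$ from Lemma~\ref{lem:Ln}), obtaining the eigenvalues $\mu = 0$ and $\lambda_\pm = \tfrac12\bigl(q \pm \sqrt{q^2+8}\bigr)$, i.e.\ the roots of $x^2 - qx - 2$. Solving $H_3 f = 0$ gives the eigenvector $(1,0,-1)$, so the eigenvalue $0$ is antisymmetric ($f(u)=-f(v)$), while for $\lambda_\pm\neq 0$ the endpoint equations $f_2 = \lambda f_1$ and $f_2 = \lambda f_3$ force $f_1 = f_3$, so $\lambda_\pm$ are symmetric. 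All three eigenvectors have nonzero endpoint entries (Lemma~\ref{lem:Ln}, or a one-line direct check), so condition~1 of Lemma~\ref{lem:eig} is automatic and everything reduces to condition~2. (Since only $q^2$ and $\sqrt{q^2+8}$ will enter, the sign convention on the potential is irrelevant to the final statement.)

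Next I would translate condition~2. With the single antisymmetric eigenvalue $\mu = 0$, it reads: there exists $T$ with $e^{iT\lambda_+} = e^{iT\lambda_-} = -e^{iT\cdot 0} = -1$. Equivalently, both $T\lambda_+$ and $T\lambda_-$ are \emph{odd} integer multiples of $\pi$, say $T\lambda_+ = \pi a$ and $T\lambda_- = \pi b$ with $a,b$ odd. The key bookkeeping step is the substitution $a - b = 2k$ and $a + b = 2\ell$: these are even precisely because $a,b$ are odd, and since $a = k+\ell$ and $b = \ell - k$, the integers $a,b$ are odd exactly when $k$ and $\ell$ have opposite parity. This is the origin of the opposite-parity hypothesis.

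Then I would extract the Diophantine condition from the symmetric functions of the roots, $\lambda_+ + \lambda_- = q$ and $\lambda_+ - \lambda_- = \sqrt{q^2+8}$. Adding and subtracting $T\lambda_\pm = \pi a,\pi b$ gives $Tq = \pi(a+b) = 2\pi\ell$ and $T\sqrt{q^2+8} = \pi(a-b) = 2\pi k$; the latter already yields $t = \tfrac{2\pi k}{\sqrt{q^2+8}}$. Dividing the two identities gives $\tfrac{q}{\sqrt{q^2+8}} = \tfrac{\ell}{k}$, which on squaring and clearing denominators becomes exactly $(k^2-\ell^2)q^2 = 8\ell^2$. For the converse I would start from opposite-parity integers $k,\ell$ satisfying this equation, rewrite it as $\tfrac{q^2+8}{q^2} = \tfrac{k^2}{\ell^2}$ so that $\tfrac{q}{\sqrt{q^2+8}} = \tfrac{\ell}{k}$ after fixing signs, and verify that $T = \tfrac{2\pi k}{\sqrt{q^2+8}}$ produces $T\lambda_+ = \pi(k+\ell)$ and $T\lambda_- = \pi(\ell-k)$, both odd multiples of $\pi$, so that condition~2 holds.

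The computations are otherwise routine algebra with $x^2 - qx - 2$, so I expect the only delicate points to be the two bookkeeping steps: matching the parity of $a,b$ to the opposite parity of $k,\ell$ through the substitution, and the sign handling in the converse. For the latter, note that the equation $(k^2-\ell^2)q^2 = 8\ell^2$ is even in both $k$ and $\ell$, so one may freely take $k>0$ (making $t>0$) and choose $\mathrm{sign}(\ell)=\mathrm{sign}(q)$ to ensure $\tfrac{q}{\sqrt{q^2+8}} = \tfrac{\ell}{k}$ with the correct sign; this is what guarantees the phases equal $-1$ rather than merely agreeing up to sign.
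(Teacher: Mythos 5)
Your proposal is correct and follows essentially the same route as the paper: compute the explicit spectrum of $H_3$ (eigenvalue $0$ with antisymmetric eigenvector $(1,0,-1)$, and the symmetric pair $\lambda_\pm = \tfrac12\bigl(q\pm\sqrt{q^2+8}\bigr)$), then reduce to condition~2 of Lemma~\ref{lem:eig} and extract the Diophantine constraint; your parametrization $T\lambda_\pm = \pi a,\pi b$ with $a=k+\ell$, $b=\ell-k$ is just a cleaner bookkeeping of the paper's step deriving $t=2\pi k/\sqrt{q^2+8}$ and requiring $kq/\sqrt{q^2+8}$ to be an integer of opposite parity to $k$. If anything, your treatment of the converse direction and of the sign choices (taking $k>0$ and $\mathrm{sign}(\ell)=\mathrm{sign}(q)$) is more explicit than the paper's, which states the ``if'' direction without spelling out this verification.
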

\begin{proof}
We have
\[
H = \begin{bmatrix}
0&1&0\\1&q&1\\0&1&0\end{bmatrix}.
\]
Observe that 
\[
\begin{bmatrix}0&1&0\\1&q&1\\0&1&0\end{bmatrix}\begin{bmatrix}
1\\0\\-1\end{bmatrix} = \begin{bmatrix}
0\\0\\0
\end{bmatrix}.
\]
Let 
\[
\mu = \frac{q\pm\sqrt{q^2+8}}{2}
\]
and observe that
\[
\begin{bmatrix}0&1&0\\1&q&1\\0&1&0\end{bmatrix}\begin{bmatrix}
1\\ \mu\\1\end{bmatrix} = \mu\begin{bmatrix}
1\\ \mu\\1
\end{bmatrix}.
\]
We have thus found all three eigenvalues of $H$, with 0 corresponding to an eigenvector whose entries at the endpoints are opposite, and the others whose eigenvectors are constant on the endpoints.  Then by Lemma \ref{lem:eig}, perfect state transfer between $u$ and $v$ happens if and only if
\[
e^{it\sqrt{q^2+8}/2} = e^{-it\sqrt{q^2+8}/2} = -e^{-itq/2}
\]
The first equality implies that we must have
\[
t = \frac{2\pi k}{\sqrt{q^2+8}}
\]
for some integer $k$, which further implies, from the second equality, the following two cases.  If $k$ is even, then $kq/\sqrt{q^2+8}$ must be an odd integer, and if $k$ is odd, then $kq/\sqrt{q^2+8}$ must be an even integer.  Therefore, if there is an integer $\ell$ of opposite parity to $k$ such that
\[
(k^2-\ell^2)q^2 = 8\ell^2
\]
then we have perfect state transfer at time
\[
t = \frac{2\pi k}{\sqrt{q^2+8}}.
\]
\end{proof}

We remark that given any integer $\ell$, then any choice of $k>\ell$ of opposite parity yields a value of the potential $q$ for which perfect state transfer occurs, namely
\[
q = \sqrt{\frac{8\ell^2}{k^2-\ell^2}}.
\]
At this value of $q$, we see the value of $t$ from above at which state transfer occurs becomes
\[
t = \frac{2\pi}{\sqrt{8}}\sqrt{k^2 -\ell^2}.
\]
This has a remarkable consequence. Unless $q=0$, we see that 
\[ q\cdot t \geq 2\pi \sqrt{\ell^2} \geq 2 \pi,\] in other words, $q$ and $t$ cannot be small at the same time. Small values of the potential require long waiting times before tunneling first occurs. It is an interesting open question if this is a phenomenon for general graphs with potential for which tunneling occurs.  

We remark that the relationship between $q$ and $t$ seen above is consistent with with the relationship expected based on numerical evidence given in \cite{Casaccino2009}.

\subsection{Even length paths}

\begin{theorem}\label{thm:evenpath}
Let $G = P_{2n}$, the path on $2n \geq 4$ vertices, let $u,v$ be the endpoints of the path, and let $Q = diag(Q_1,\dots,Q_n,Q_n,\dots,Q_1)$ be any symmetric potential. Then perfect state transfer cannot occur from $u$ to $v$ for any values of the $Q_i$'s.
\end{theorem}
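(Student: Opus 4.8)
The plan is to use the spectral characterization (Lemma~\ref{lem:eig} and Corollary~\ref{cor:rational}) together with the factorization $L_{2n} = (p_n + p_{n-1})(p_n - p_{n-1})$ from Lemma~\ref{lem:Ln}, and to derive a contradiction from the \emph{parity} condition. Recall that the roots of $p_n - p_{n-1}$ are the eigenvalues $\{\lambda_i\}$ whose eigenvectors satisfy $f(1) = f(n)$, while the roots of $p_n + p_{n-1}$ are the $\{\mu_j\}$ with $f(1) = -f(n)$; Lemma~\ref{lem:Ln} guarantees these endpoint values are nonzero, so condition~1 of Lemma~\ref{lem:eig} holds automatically. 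Thus perfect state transfer hinges entirely on condition~2, equivalently on the eigenvalue relations in Corollary~\ref{cor:rational}: all differences $\lambda_i - \lambda_j$ must be rationally commensurable, and every cross-difference $\lambda_i - \mu_j$ must equal an odd-over-even multiple of a fixed $\lambda_k - \lambda_\ell$.

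First I would exploit the rationality condition to pin down the arithmetic nature of the eigenvalues. Since $H_{2n}$ is an integer matrix modified only on the diagonal, the characteristic polynomials $p_n + p_{n-1}$ and $p_n - p_{n-1}$ are monic with real (indeed, if we aim at a clean statement, algebraic) coefficients; the commensurability of all the $\lambda$-differences forces the $\lambda_i$ to lie in a single arithmetic progression up to a common scale, and similarly the cross-ratios force the $\mu_j$ to interleave on the \emph{half-integer} shifted lattice. The key structural input is that $L_{2n}(x) = p_{2n}(x)$ is a \emph{monic integer-coefficient polynomial in $x$ once the $Q_i$ are chosen}, but more usefully, the two factors share no roots and together carry $2n \geq 4$ eigenvalues split as $n$ and $n$.

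The heart of the argument — and what I expect to be the main obstacle — is converting the parity condition into a contradiction via a mod~$2$ reduction. The idea is that after normalizing the eigenvalue differences to be integers (legitimate after a common rescaling coming from the rationality condition), the parity condition says the $\lambda$'s and $\mu$'s occupy opposite residues in a precise sense. I would reduce the relevant polynomial identity relating $p_n + p_{n-1}$ and $p_n - p_{n-1}$ modulo~$2$: since $p_n + p_{n-1} \equiv p_n - p_{n-1} \pmod 2$, the two factors of $L_{2n}$ become \emph{identical} mod~$2$, which should clash with the requirement that their roots sit on disjoint parity classes of the underlying lattice. Concretely, I expect that the product $\prod_{i,j}(\lambda_i - \mu_j)$, which must be an odd numerator times an even denominator (to the appropriate power) by the parity half of Corollary~\ref{cor:rational}, is forced by the mod~$2$ coincidence of the two factors to carry the \emph{wrong} $2$-adic valuation — giving the contradiction for every $n \geq 2$. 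Making the mod~$2$ reduction interact correctly with the normalization (clearing denominators consistently across both factors so that the parity statement is meaningful) is the delicate point; the rest is bookkeeping with the recursion for $p_n$.

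The plan therefore proceeds as: (1) use Lemma~\ref{lem:Ln} to identify the $\lambda$/$\mu$ split and confirm condition~1 is free; (2) invoke Corollary~\ref{cor:rational} to rescale so all $\lambda_i - \lambda_j$ are integers and record the parity constraint on the $\lambda_i - \mu_j$; (3) reduce the polynomial identity modulo~$2$, using $2 p_{n-1} \equiv 0$, to show the two factors coincide mod~$2$; and (4) extract the $2$-adic contradiction with the odd-over-even parity requirement. I expect step~(3)–(4) to be where all the real work lies, with the preceding steps being setup that the earlier lemmas largely hand us.
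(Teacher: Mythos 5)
Your overall direction --- exploiting the parity condition of Corollary~\ref{cor:rational} through a mod-2 reduction, with the symmetric/antisymmetric split of the spectrum coming from Lemma~\ref{lem:Ln} --- is indeed the paper's strategy, but your plan has two genuine gaps at exactly the points you defer as ``bookkeeping'' or ``the delicate point.'' First, the normalization. Corollary~\ref{cor:rational} only gives \emph{commensurability} of eigenvalue differences; to reduce the factors $p_n \pm p_{n-1}$ mod~2 you need their coefficients to be integers (or at least 2-adically integral), i.e.\ you need arithmetic control of the potential values $Q_i$ themselves, not just of the spectrum. Rescaling so that the $\la_i - \la_j$ become integers does not achieve this: rescaling the Hamiltonian rescales the off-diagonal $1$'s as well, so the rescaled matrix and its characteristic polynomial are no longer integral, and the statement ``$p_n + p_{n-1} \equiv p_n - p_{n-1} \pmod 2$'' has no meaning. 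The paper closes this gap with two steps your outline omits: a trace identity $\sum_{i,j}(\la_i - \mu_j) = n(\Tr H_+ - \Tr H_-) = 2n$ showing that $\la_1 - \la_2$ is genuinely rational, and then a descending induction on $\Tr(H_+^{2k+2} - H_-^{2k+2})$ showing that every $Q_i$ is rational. Only then can one write $Q_i = a_i/K$ with $(K, a_1, \dots, a_n) = 1$ and pass to integer matrices $\HH_\pm = K H_\pm$ with integer eigenvalues $l_i, m_j$. That coprimality normalization is not cosmetic; it is what the final contradiction runs against.

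Second, the endgame you sketch does not yield a contradiction. The mod-2 coincidence of the two factors, via unique factorization over $F_2$ (equivalently, via $\prod l_i - \prod m_j = \Det\HH_+ - \Det\HH_- = 2K D_{n-1}$), gives only that the $l_i$ and $m_j$ all share one parity: all odd or all even. Both cases are perfectly consistent with the odd/even ratio condition --- for instance $l_i \equiv 0$ and $m_j \equiv 2 \pmod 4$ satisfies every 2-adic requirement --- so the resultant $\prod_{i,j}(l_i - m_j)$ never acquires a ``wrong'' 2-adic valuation. Indeed, no purely spectral parity argument can suffice, since $P_2$ and $P_3$ do admit perfect state transfer; the contradiction must come from path-specific matrix identities, and the paper kills each case separately. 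The all-odd case dies by playing $\Tr(\HH_+ - \HH_-) = 2K$ against $\Tr(\HH_+^3 - \HH_-^3) = 8K^3$ using $s^3 \equiv s \pmod 8$, which forces $4 \mid K$, making $\Det\HH_+$ even and contradicting oddness of the $l_i$'s. The all-even case dies by first forcing $K$ to be even, via the recursion $D_{j+2} = a_{j+2}D_{j+1} - K^2 D_j$ and a descent reaching $D_0 = 1$, and then observing that $\HH_+$ mod~2 is the diagonal matrix with entries $a_1, \dots, a_{n-1}, 0$, so its characteristic polynomial mod~2, which must equal $x^n$, forces every $a_i$ to be even --- contradicting $(K, a_1, \dots, a_n) = 1$. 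These steps are the substance of the proof; your proposal stops precisely where the theorem's difficulty begins.
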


\begin{proof}
Let $H_+ = A_n+diag(Q_1,\dots,Q_{n-1},Q_n+1)$ and $H_- = A_n + diag(Q_1,\dots,Q_{n-1},Q_n-1)$. Let $\la_i : i=1,\dots,n$ and $\mu_j : j= 1,\dots,n$ denote the eigenvalues of $H_+$ and $H_-$ respectively. By Lemma~\ref{lem:Ln} we know that these correspond to symmetric and anti-symmetric eigenvectors. 

Assume there is perfect state transfer at some time $t$. Then by Corollary \ref{cor:rational}, the ratios of type $(\la_i - \mu_j)/(\la_k - \la_l)$ have to be equal to some odd/even fraction.

\begin{claim}\label{claim:l1-l2}
$\la_i - \la_j, \mu_i-\mu_j, \la_i- \mu_j$ are all rational.
\end{claim}
\begin{proof}
By the ratio condition we know that $(\la_i - \mu_j)/(\la_1 - \la_2)$ has to be rational for any $i,j$. Then the sum
\[ \sum_{i,j=1}^n \frac{\la_i - \mu_j}{\la_1-\la_2} = \frac{n}{\la_1 - \la_2} \left( \sum_{i=1}^n \la_i - \sum_{j=1}^n \mu_j\right) = (\Tr H_+ - \Tr H_-)\frac{n}{\la_1 - \la_2} = \frac{2n}{\la_1-\la_2}\] is rational, and so $\la_1 - \la_2$ is also rational. From this, the rationality of all other such differences follow from the ratio condition.
\end{proof}

\begin{corollary}
We can assume that all eigenvalues of $H_{2n}$ are rational.
\end{corollary}

\begin{proof}

As an immediate consequence of Claim~\ref{claim:l1-l2}, there is a real number $\alpha$ such that $\la_i - \alpha$ and $\mu_j - \alpha$ are rational for all $i,j \in \{1,2,\dots,n\}$. Then the eigenvalues of the matrix $H'_{2n} = H_{2n} - \alpha  I_n$ are rational and satisfy the ratio-conditions. Furthermore the potential is still symmetric. So if there was tunneling for $H_{2n}$ then there is also tunneling for $H'_{2n}$.
\end{proof}

From now on we are going to use this assumption without further warning.

\begin{claim}\label{cl:qirat}
All the $Q_i$'s are also rational.
\end{claim}

\begin{proof}
We proceed by a descending induction from $Q_n$ to $Q_1$. Suppose we've already shown that $Q_n, Q_{n-1}, \dots, Q_{n-k+1}$ are rational. (Here we also include the case where nothing was shown yet as $k=0$.)
Let us look at 
\[ \Tr( H_+^{2k+2} - H_-^{2k+2}) = \sum_{i=1}^n \la_i^{2k+2} - \mu_i^{2k+2}  \in \Q,\]
and observe that $\Tr( H_+^{2k+2} - H_-^{2k+2}) = (4k+4)Q_{n-k} + $ an integer coefficient polynomial in $Q_{n-k+1},Q_{n-k+2}, \dots, Q_n$. Then, by induction, $Q_{n-k}$ has to be rational. 

To see why the trace expression is indeed what we claim it is, write the $j$th diagonal entry of $H_{\pm}^{2k+2}$ as sum of weighted cycles of length $2k+2$ starting and returning to the $j$th node of the path. For such a weighted cycle to have different weights in $H_+$ and $H_-$, the cycle has to pass through the loop on the $n$th node. Thus $j \geq n-k$. Thus $ \Tr( H_+^{2k+2} - H_-^{2k+2})$ will not depend on $Q_1, \dots, Q_{n-k-1}$. If such a cycle further passes through the loop edge on vertex $n-k$, then it has to consist of all edges between $n-k$ and $n$ exactly once in both directions as well as the two loops at $n-k$ and $n$ respectively. There are $2$ such cycles for any starting point $n-k \leq j \leq n$. These together contribute $(4k+4)Q_{n-k}$ to $\Tr( H_+^{2k+2} - H_-^{2k+2})$, and everything else as an integer-weighted linear combinations of monomials depending only on the higher $Q$'s. From this the observation follows.
\end{proof}

Introducing a further shift in the potential we can now assume that $Q_n = 0$.

Let us write $Q_i = a_i / K$ where $a_i, K \in \Z$ and $(K,a_1,a_2,\dots, a_n) = 1$. Then the rational numbers $K\cdot \la_i, K \cdot \mu_j$ are eigenvalues of the integer matrices $\HH_{\pm} = K \cdot H_{\pm}$, so they have to be integers themselves. Let us write $l_i = K \cdot \la_i$ and $m_j = K \cdot \mu_j$. 

By the ratio condition we see that for any $i_1, i_2,j_1,j_2$:
\[ l_{i_1} - l_{i_2} = \frac{even}{odd} l_{j_1}-l_{j_2} = even,\] hence all $l_i$'s have the same parity. Similarly all $m_j$'s have the same parity. 

First suppose all $l_i$'s and all $m_j$'s are odd. Then, using that $s^3 \equiv s $ mod 8 for odd $s$, we get mod 8 that 
\[ 2K = \Tr( \HH_+ - \HH_-) = \sum l_i - m_i \equiv \sum l_i^3 - m_i^3 = \Tr( \HH_+^3 - \HH_-^3) = 8K^3 \equiv 0,\] hence $2K$ is divisible by 8, thus $K$ is divisible by 4. But then $\prod l_i = \Det \HH_+$ is even, which contradicts that all the $l_i$'s are odd.

Next observe that 
\[ \prod l_i - \prod m_j = \Det \HH_+ - \Det \HH_- =( K  D_{n-1}-K^2 D_{n-2} )-(-K D_{n-1}-K^2 D_{n-2}) = 2K D_{n-1} \] is even, where $D_{j} = \Det(K(A_{j} + diag(Q_1,\dots,Q_{j})))$. This means that neither the $l_i$'s have to have the same parity as the $m_j$'s (remember they all have the same parity within each group) and thus they all have to be even. 

Finally observe that this implies that $K$ has to be even as well. Assume for a contradiction that $K$ is odd. Since all $l$'s and $m$'s are even, we get that $2K D_{n-1}$ and $2K^2 D_{n-2}$ are both divisible by $2^{n}$. If $K$ is odd and $n \geq 2$ then in fact $D_{n-1}$ and $D_{n-2}$ both have to be even. However from this we get by a  induction that any $D_{n-j}$ is even $(j=1,2,\dots, n)$, which is impossible as $D_0 = 1$. The induction follows from the recursive formula 
\[ D_{j+2} = a_{j+2} D_{j+1} - K^2 D_j.\] Hence if $D_{j+2}$ and $D_{j+1}$ are even, but $K$ is odd, then $D_j$ must also be even. 

So far we have shown that if there is tunneling then all the $l_i$'s have to be even and $K$ has to be even.  The final contradiction will arise from examining the characteristic polynomial $P(x) = \prod_{i=1}^n (x-l_i)$ of $\HH_+$. Considering $P(x)$ over the field $F_2$ it reduces to $x^n$ since all the $l_i$'s are even. At the same time if we first consider $\HH_+$ mod 2, we get the diagonal matrix with entries $a_1, a_2, \dots, a_{n-1}, 0$, so the roots of its characteristic polynomial (which has to coincide with $P(x)$ mod 2) are $a_1, a_2, \dots, a_{n-1},0$. In particular, because of unique factorization over $F_2$, it follows that all the $a_i$'s are even. This, however, contradicts that $(K,a_1,\dots,a_{n-1}) = 1$. 

\end{proof}

\subsection{Odd length paths}

\begin{theorem}\label{thm:oddpath}
Let $G = P_{2n+1}$, the path on $2n+1 \geq 5$ vertices, let $u,v$ be the endpoints of the path, and let $Q = diag(Q_1,\dots,Q_n,Q_{n+1},Q_n,\dots,Q_1)$ be any symmetric potential. Then perfect state transfer cannot occur from $u$ to $v$ for any value of the $Q_i$'s.
\end{theorem}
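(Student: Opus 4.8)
The plan is to adapt the strategy that worked for the even-length case in Theorem~\ref{thm:evenpath}, exploiting the factorizations from Lemma~\ref{lem:Ln}. The key structural input is that for the odd path $P_{2n+1}$ the symmetric characteristic polynomial factors as $L_{2n+1} = p_n(p_{n+1}-p_{n-1})$, where (by Lemma~\ref{lem:Ln}) the roots of $p_{n+1}-p_{n-1}$ correspond to symmetric eigenvectors ($f(1)=f(n)$) and the roots of $p_n$ correspond to anti-symmetric eigenvectors ($f(1)=-f(n)$). So I would set $\{\la_i\}$ to be the roots of $p_{n+1}-p_{n-1}$ (which, via \eqref{submatrix3}, are the eigenvalues of the $(n+1)\times(n+1)$ matrix $H_+$ obtained by putting a weight-$2$ edge at the bottom) and $\{\mu_j\}$ the roots of $p_n$ (the eigenvalues of the $n\times n$ matrix $H_-$ that is just the truncated path Hamiltonian). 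Perfect state transfer forces the ratio condition from Corollary~\ref{cor:rational} on these two sets of eigenvalues.

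\medskip
\noindent\textbf{Reducing to rational, then integer, data.} First I would run the analogue of Claim~\ref{claim:l1-l2} and its corollary: summing the forced-rational ratios $(\la_i-\mu_j)/(\la_1-\la_2)$ and using $\Tr H_+ - \Tr H_-$ (here a clean integer coming from the extra diagonal entry $Q_{n+1}$ and the truncation) to conclude all pairwise differences among eigenvalues are rational, hence after an additive shift all eigenvalues are rational. Next I would re-run Claim~\ref{cl:qirat} by descending induction on the $Q_i$, using $\Tr(H_+^{2k+2}-H_-^{2k+2})$ as a closed-walk count: walks that distinguish $H_+$ from $H_-$ must traverse the modified bottom portion, so the leading new contribution isolates one $Q_{n-k}$ with an integer coefficient, forcing rationality. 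Once all $Q_i$ and all eigenvalues are rational, I clear denominators, writing $Q_i=a_i/K$ with $\gcd(K,a_1,\dots)=1$ and $l_i=K\la_i$, $m_j=K\mu_j$ integers, eigenvalues of integer matrices $\HH_\pm=K H_\pm$. The ratio/parity condition forces all $l_i$ to share one parity and all $m_j$ to share one parity.

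\medskip
\noindent\textbf{The mod-2 endgame.} This is where I expect the real work, and the real obstacle. In the even case the two matrices had the \emph{same} size $n$, which made $\Tr(\HH_+-\HH_-)=2K$ and the determinant comparison symmetric and clean. Here $\HH_+$ is $(n+1)\times(n+1)$ and $\HH_-$ is $n\times n$, so the traces, determinants, and characteristic polynomials live in different dimensions and I cannot subtract eigenvalue-by-eigenvalue. The hard part will be finding the right integer/parity invariants to replace $\Tr(\HH_+-\HH_-)$, $\Det\HH_+-\Det\HH_-$, and the power-sum $\sum l_i^3-m_i^3$ identity. My plan is to relate the two via the recursion $p_{n+1}=(x-Q_{n+1})p_n-p_{n-1}$: since $\HH_+$ has characteristic polynomial $K^{n+1}(p_{n+1}-p_{n-1})$ and $\HH_-$ has $K^n p_n$, the evaluations $p_n(x)$ and $p_{n-1}(x)$ at $x=0$ (i.e.\ the determinants $D_n,D_{n-1}$ of the truncated integer matrices) should let me express $\prod l_i$ and $\prod m_j$ through the same $D_j$ sequence governed by $D_{j+2}=a_{j+2}D_{j+1}-K^2 D_j$ as in the even proof. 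I would then push the parity argument: force all $m_j$ even, then all $l_i$ even, then $K$ even via the same $D_0=1$ contradiction from the recursion, and finally reduce the characteristic polynomial of $\HH_+$ modulo~$2$. Over $\mathbb F_2$ it becomes $x^{n+1}$ since all $l_i$ are even, while reducing $\HH_+$ entrywise mod $2$ gives a tridiagonal matrix whose diagonal is $(a_1,\dots,a_n,a_{n+1})$ and whose single off-diagonal modification (the $2$ becomes $0$ mod $2$) decouples the last vertex; unique factorization over $\mathbb F_2$ then forces all $a_i$ even, contradicting $\gcd(K,a_1,\dots)=1$. I expect the delicate points to be (i) getting the parity of $K$ and the interplay of the $D_j$ recursion exactly right when the matrices differ in size, and (ii) correctly computing the mod-$2$ reduction of $\HH_+$ near the doubled edge so that the char-poly factorization contradiction goes through cleanly.
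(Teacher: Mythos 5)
Your overall architecture matches the paper's: both split the spectrum via $L_{2n+1}=p_n(p_{n+1}-p_{n-1})$ into the eigenvalues of the $(n+1)\times(n+1)$ matrix with the weight-$2$ edge (symmetric eigenvectors) and those of the truncated $n\times n$ path (anti-symmetric), then head for a mod-$2$ contradiction, and the factorization you anticipate --- characteristic polynomial of the big matrix $\equiv x\cdot$(characteristic polynomial of the small matrix) mod $2$ --- is literally one of the paper's claims. But there is a genuine gap at your very first reduction, and it is fatal to everything downstream. You assert that $\Tr H_+ - \Tr H_-$ is ``a clean integer coming from the extra diagonal entry $Q_{n+1}$ and the truncation.'' It is not: the diagonals of the two matrices are $(Q_1,\dots,Q_n,Q_{n+1})$ and $(Q_1,\dots,Q_n)$ (the weight-$2$ modification sits \emph{off} the diagonal), so $\Tr H_+-\Tr H_-=Q_{n+1}$, which is $0$ after the shift. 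Worse, because the two matrices have different sizes, the double sum over the ratio condition produces the weighted combination $n\Tr H_+-(n+1)\Tr H_- = -\left(Q_1+\cdots+Q_n\right)$, a quantity depending on the unknown potential rather than a nonzero integer. So you can only conclude that eigenvalue differences are rational multiples of $Q=Q_1+\cdots+Q_n$, not that they are rational. This is not a repairable technicality within your scheme: for odd paths the perfect-state-transfer conditions genuinely admit irrational data --- on $P_3$ transfer occurs with $q=\sqrt{8\ell^2/(k^2-\ell^2)}$ and irrational eigenvalues --- so no argument deducing rational eigenvalues from the ratio conditions can be sound, and your subsequent steps (integer matrices $KH_\pm$, $Q_i=a_i/K$, the parity bookkeeping) never get off the ground.

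This is exactly where the paper's odd-case proof departs from the even case. It proves instead that all eigenvalues and all $Q_i$ are rational multiples of $Q$, that $Q^2\in\Q$ (via $\Tr H_+^2-\Tr H_-^2=4$), writes $Q_i=(a_i/K)\sqrt{a/b}$ with $a,b$ coprime and square-free, and passes to the matrices $\sqrt{ab}\,K H_\pm$, whose characteristic polynomials are monic with integer coefficients even though the matrices themselves are irrational. The endgame then has a different shape from the one you propose: $\prod l_i = 2K^2ab\,D_{n-1}$, the determinant recursion becomes $D_{k+2}=a a_{k+2}D_{k+1}-K^2ab\,D_k$, and the $D_0=1$ argument shows $K^2ab$ is even while a separate mod-$2$ diagonal argument shows $K$ is \emph{odd} (your plan forces $K$ even --- the opposite conclusion), so that exactly one of $a,b$ is even; the final contradiction requires a $2$-adic valuation analysis of the $D_k$'s ($2^k\mid D_k$ but never $2^{k+1}\mid D_k$) that has no analogue in the even case. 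In short: you correctly identified the skeleton, the dimension mismatch as the danger point, and the right mod-$2$ factorization trick, but the missing idea --- that rationality fails and one must carry the irrational scale $\sqrt{a/b}$ through the entire parity argument --- is the actual content of the odd case.
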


\begin{proof}
First of all, by shifting the potential, we can assume that $Q_{n+1}=0$. Let $H_s$ and $H_a$ denote the matrices on the left hand side of \eqref{submatrix3} and \eqref{submatrix4} respectively. That is, $H_a = A_{n+1} + diag(Q_1,\dots,Q_n,0)$, and $H_s$ is obtained from $A_{n} + diag(Q_1,\dots,Q_n)$ by replacing the 1 in the last row by 2. Let us denote by $\la_1,\dots, \la_{n+1}$ and $\mu_1,\dots, \mu_n$ the eigenvalues of $H_a$ and $H_s$ respectively. 

Assume there is perfect state transfer at some time $t$. Again by Corollary \ref{cor:rational}, the ratios of type $(\la_i - \mu_j)/(\la_k - \la_l)$ have to be equal to some odd/even fraction.

\begin{claim}\label{claim:l1-l2Q}
$\la_i - \la_j, \mu_i-\mu_j, \la_i- \mu_j$ are all rational multiples of $Q= Q_1+\dots+Q_n$.
\end{claim}
\begin{proof}
By the ratio condition we know that $(\la_i - \mu_j)/(\la_1 - \la_2)$ has to be rational for any $i,j$. Then the sum
\[ \sum_{i,j=1}^n \frac{\la_i - \mu_j}{\la_1-\la_2} = \frac{1}{\la_1 - \la_2} \left( n \sum_{i=1}^n \la_i - (n+1)\sum_{j=1}^n \mu_j\right) = \frac{1}{\la_1-\la_2}\left(n \Tr H_s -(n+1)\Tr H_a\right) = \frac{-Q}{\la_1 - \la_2}\] is rational, and so $\la_1 - \la_2$ is a rational multiple of $Q$. From this the rationality of all other such differences follow from the ratio condition.
\end{proof}

\begin{claim}
Each $\la_i$ and each $\mu_j$ is a rational multiple of $Q$.
\end{claim}
\begin{proof}
\[ Q = \Tr H_a = \sum_{i=1}^n \mu_i = n\mu_1 + \sum_{i=1}^n \mu_i - \mu_1 = n \mu_1 + r Q\] for some $r \in \Q$. Hence $\mu_1 = (1-r)/n \cdot Q$. The same for the rest of the eigenvalues follows from the previous claim.
\end{proof}

\begin{claim}
$Q^2$ is rational.
\end{claim}
\begin{proof}
The following sum is rational multiple of $Q^2$ by Claim~\ref{claim:l1-l2Q}:
\[ \sum_{i=1}^{n+1} \la_i^2 - \sum_{j=1}^n \mu_j^2 = \Tr H_s^2 - \Tr H_a^2 = \sum_{i=1}^n Q_i^2 + 2n+2 - \sum_{j=1}^n Q_i^2 - (2n-2) = 4.\] So $Q^2$ has to be rational itself.
\end{proof}

\begin{claim}
Each $Q_i$ is a rational multiple of $Q$. 
\end{claim}

\begin{proof}
This proceeds exactly as the proof of Claim~\ref{cl:qirat}. Successively considering $\Tr H_s^{2j+1} - \Tr H_a^{2j+1}$ for $j=1,2,\dots$ we find that a rational multiple of $Q$ is equal to a rational linear combination of terms already shown to be a rational multiple of $Q$ and a non-zero rational multiple of $Q_{n+1-j}$. This implies, by induction on $j$, that $Q_{n+1-j}$ has to be a rational multiple of $Q$ itself. 

Note: we exploit at each step that $Q^2$ is rational, hence any odd power of $Q$ is a rational multiple of $Q$.
\end{proof}

Since $Q^2 \in \Q$ and all $Q_i$s are rational multiples of $Q$, we can let $Q_i = a_i/K\cdot \sqrt{a/b}$ where $a, b \in \Z$ are square free coprime integers, and $K, a_i \in \Z$ such that $(K,a_1,\dots,a_n)=1$. It can be also assumed that $(K,a)=1$, since otherwise $K, a, b$ could be replaced by $K/p, a/p, p b$ for any prime $p | (K,a)$. 

Further let $l_i = K \sqrt{ab} \la_i \in \Q$ and $m_j = K \sqrt{ab} \mu_j \in \Q$. Then then $l_i$s are eigenvalues of the matrix $\HH_s = \sqrt{ab}K \cdot H_s$ and $m_j$s are the eigenvalues of $\HH_a = \sqrt{ab} K \cdot H_a$.  It is easy to see that the characteristic polynomials of both of these matrices are monic and have integer coefficients, so all $l_i$s and $m_j$s are in fact integers. Furthermore by the ratio condition 
\begin{eq}{eq:ratiocondOdd} l_i - l_j = (l_1-m_1)\frac{\mbox{even}}{\mbox{odd}} = \mbox{even},\end{eq} so all the $l_i$'s have the same parity, and similarly all the $m_j$'s have the same parity.

Let us further write $l_i = 2^{\alpha_i}(2s_i + 1)$ and $m_j = 2^{\beta_j}(2t_j+1)$ where $\alpha_i, \beta_j \in \Z_{\geq 0}, s_i, t_j \in \Z$. Let $A = \max\{\alpha_1,\dots, \alpha_{n+1}\}$  and $\beta = \max\{\beta_1, \dots, \beta_{n}\}$.

\begin{claim}~
\begin{enumerate}
\item If $A = B$ then every $\alpha_i = A$ and every $\beta_j = A$. 
\item If $A < B$ then every $\alpha_i = A$ and every $\beta_j \geq B \geq A+1$.
\item If $A > B$ then every $\beta_j = B$ and every $\alpha_j \geq A \geq B+1$.
\end{enumerate}
\end{claim}
\begin{proof}
This follows simply from the ratio condition \eqref{eq:ratiocondOdd}: We can assume that $\alpha_1 = A$ and $\beta_1 = B$. Applying \eqref{eq:ratiocondOdd} with $j=1$ we get that $2^{\min\{A,B\}+1}$ divides $l_i - l_j$, so $\alpha_i \geq \min\{A,B\}$ and if $A \geq B+1$ then actually $\alpha_i \geq B+1$. The reverse cases follow similarly. 
\end{proof}

\begin{claim}
In fact $A=B$ is impossible in the previous claim.
\end{claim}

\begin{proof}
Suppose $A=B$. Then
\[ 0 = \Tr \HH_s - \Tr \HH_a = \sum_1^{n+1} l_i - \sum_1^{n}m_j = 2^A \left(\sum_1^{n+1} 2s_i+1 - \sum_1^n 2t_j + 1 \right) = 2^A \cdot \mathrm{odd},\] a clear contradiction.
\end{proof}

\begin{claim}
All the $l_i$s are even.
\end{claim}
\begin{proof}
\[ \prod_{i=1}^{n+1}l_i = \Det(\HH_s) = 2K^2 a b D_{n-1}\] is even, where $D_k = \Det( K\sqrt{ab} \cdot A_k + a \cdot diag(a_1,\dots,a_k))$. Since all $l_i$s have the same parity, they must all be even.
\end{proof}

\begin{claim}
The characteristic polynomial of $\HH_s$ mod 2 is $x$ times the characteristic polynomial of $\HH_a$ mod 2. Hence all the $m_j$s are also even.
\end{claim}

\begin{proof}
The first part is obvious from expanding the determinant defining the characteristic polynomial of $\HH_s$. The second part follows from the unique factorization of polynomials mod 2.
\end{proof}

\begin{claim}
$K^2 a b$ must be even.
\end{claim}

\begin{proof}
Since all the $m_j$s are even, their product, $D_n$, is even. Assume $K^2 a b$ is odd. Then, since all the $l_j$s are even and their product is $2K^2 ab D_{n-1}$, it follows that $D_{n-1}$ is even. Then by induction all the $D_k$s are even: $D_{k+2} = a a_{k+2} D_{k+1} - K^2 a b D_{k}$. If $D_{k+2}$ and $D_{k+1}$ are even, then so is $D_k$. This implies that $D_0 = 1$ is also even, a contradiction.
\end{proof}

\begin{claim}
$K$ is odd, and hence exactly one of $a$ and $b$ are even.
\end{claim}

\begin{proof}
Suppose $K$ is even. Then by the $(K,a)=1$ assumption $a$ is odd. Then the characteristic polynomial of $\HH_a$ mod 2 is equal to the characteristic polynomial of its diagonal mod 2. (Since all terms involving off-diagonal elements will be even.) So the roots of the characteristic polynomial mod 2 are equal to the diagonal elements mod 2. This means, since all $m_j$s are even, that all $a a_i$s have to be even, implying that all $a_i$s have to be even. But this contradicts $(K,a_1,\dots,a_n)=1$. 
\end{proof}

At this point we have $2^n | \prod_1^n m_j = D_n$ and $2^{n+1}| \prod_1^{n+1} l_i = 2K^2 ab D_{n-1}$ and thus (since $a,b$ are square-free) $2^{n-1}|D_{n-1}$. 

Then we obtain recursively that $2^k | D_k$ from the formula
\[ D_{k+2} = a a_{k+2} D_{k+1} - K^2 a b D_{k},\] since the $K^2 ab$ can only absorb a singe factor of 2. However, if there is any $k \leq n-1$ for which $2^{k+1}|D_k$, then we have $2^{k+1}$ divides both $D_{k}$ and $D_{k+1}$ so $2^k | D_{k-1}$ and then inductively $2^{j+1} | D_j$ for any $j \leq k$. In particular $2 | D_0 =1$ which is a contradiction.

This implies that for all $k\leq n-2$ the values $a a_{k+2}$ must be odd, otherwise we would get $2^{k+1}|D_k$ and a contradiction. This means that in fact $a$ is odd and all $a_i$ has to be odd for $i\geq3$. But this yields a contradiction: again looking at the characteristic polynomial of $\HH_a$ mod 2: all its roots should be even, but it coincides with the characteristic polynomial of just the diagonal mod 2 which apparently has at least one odd root (since $n\geq2$ in the statement of the theorem). 
\end{proof}

\section{Vertices with identical neighborhoods}

Let $G$ be a graph on $n+2$ vertices with two vertices $u$ and $v$ that share the same neighbors, and such that $u\not\sim v$.  The goal of this section is to investigate tunneling from $u$ to $v$. In particular we show the following result.

{
\renewcommand{\thetheorem}{\ref{thm:nbhd}}
\begin{theorem}
There is a potential $Q: V(G) \to \R$ for which there is perfect state transfer between $u$ and $v$.
\end{theorem}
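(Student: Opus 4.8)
The plan is to exploit the $u\leftrightarrow v$ symmetry of $G$ to block–diagonalize $H$, reduce perfect state transfer to an arithmetic condition on the eigenvalues of one block, and then argue that the freedom in the potential suffices to meet that condition. Write the vertex set as $u,v,w_1,\dots,w_n$, let $b\in\{0,1\}^n$ be the indicator of the common neighborhood of $u,v$, and let $B$ be the adjacency matrix among the $w_i$. As on paths, I will only consider potentials with $Q_u=Q_v$, since an asymmetric value at $u,v$ breaks condition (1) of Lemma~\ref{lem:eig}. Then the transposition $P$ of $u$ and $v$ commutes with $H=A-Q$, so $H$ preserves the antisymmetric line $\R(\1_u-\1_v)$ — on which it acts by the scalar $\mu:=-Q_u$ — and the $(n+1)$-dimensional symmetric subspace. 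In the orthonormal symmetric basis $f_0=\tfrac1{\sqrt2}(\1_u+\1_v),\ f_i=\1_{w_i}$, the symmetric block is the bordered matrix
\[ \tilde H=\begin{bmatrix} -Q_u & \sqrt2\,b^T\\ \sqrt2\,b & B-\mathrm{diag}(Q_1,\dots,Q_n)\end{bmatrix},\]
whose diagonal is entirely free while its off-diagonal part is fixed by the graph.

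Next I translate perfect state transfer into arithmetic. Because $H$ has a \emph{single} antisymmetric eigenvalue $\mu$, Lemma~\ref{lem:eig} shows that perfect state transfer from $u$ to $v$ occurs iff there is a time $T$ with $e^{iT\la_i}=-e^{iT\mu}$ for every symmetric eigenvalue $\la_i$ whose eigenvector does not vanish at $u$. Such a $T$ exists precisely when the differences $\la_i-\mu$ are all odd integer multiples of one common real $\rho$, in which case $T=\pi/\rho$ works (then $e^{iT(\la_i-\mu)}=e^{i\pi\cdot\mathrm{odd}}=-1$). The numbers $\la_i-\mu$ are exactly the eigenvalues of $N:=\tilde H-\mu I$, a symmetric matrix whose $(0,0)$-entry is forced to equal $0$ and which depends only on the $n$ parameters $\epsilon_j:=Q_u-Q_j$; indeed its lower block is $B+\mathrm{diag}(\epsilon_1,\dots,\epsilon_n)$. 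So it suffices to choose $\epsilon\in\R^n$ making the eigenvalues $\sigma_0(\epsilon),\dots,\sigma_n(\epsilon)$ of $N(\epsilon)$ pairwise commensurable with all ratios of the form odd/odd. Insisting in addition that every $\sigma_i\neq0$ guarantees $\mu$ is not a symmetric eigenvalue, so (generically) all eigenvalues of $H$ are simple and condition (1) of Lemma~\ref{lem:eig} holds automatically.

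The remaining, genericity, step is to produce such an $\epsilon$. I will study the ratio map $\epsilon\mapsto(\sigma_1/\sigma_0,\dots,\sigma_n/\sigma_0)$ near a base point where the $\sigma_i$ are simple and nonzero. Its differential is controlled, via first-order eigenvalue perturbation, by the matrix of squared eigenvector coordinates $\big[\,w_i(j)^2\,\big]_{i,j}$ (where $w_i$ is the unit eigenvector for $\sigma_i$) together with the radial direction $\sigma$: the ratio map is a submersion at $\epsilon$ exactly when the columns of $[w_i(j)^2]$ span an $n$-dimensional space avoiding $\R\sigma$. Exhibiting one potential where this holds makes the ratio map open, so its image contains a nonempty open set. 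Since the odd/odd rationals are dense in $\R$ (for a fixed odd denominator the admissible numerators are consecutive odd integers, of spacing tending to $0$), the set of points of $\R^n$ all of whose coordinates are odd/odd is dense; the open image therefore meets it. At such a point, clearing the (odd) denominators by a common odd factor rewrites every $\sigma_i$ as $\rho\, o_i$ with $o_i$ odd, which is exactly the condition produced above, so perfect state transfer occurs at $T=\pi/\rho$.

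The crux — and the place where the common-neighborhood hypothesis does real work — is the submersion claim. Because the $(0,0)$-entry of $N$ is pinned at $0$, the eigenvalue whose eigenvector concentrates on $f_0$ stays bounded while the others can be driven to infinity, so the naive regime of large, well-separated potentials is degenerate (the reference ratio $\sigma_i/\sigma_0$ blows up as $\sigma_0\to0$). I therefore expect the main effort to lie in selecting an \emph{interior} base point $\epsilon$ at which $\sigma_0\neq0$ and the squared-eigenvector matrix has full column rank with $\sigma$ outside its span — that is, verifying that moving the diagonal genuinely displaces the eigenvalue-ratios in all $n$ independent directions — together with the routine genericity arguments guaranteeing simplicity of the spectrum and non-vanishing of the relevant eigenvectors at $u$.
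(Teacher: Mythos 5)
Your setup is correct and, after a change of variables, coincides with the paper's own: restricting to $Q_u=Q_v$, the antisymmetric part of $H$ is the single line $\R(\1_u-\1_v)$ with eigenvalue $\mu=-Q_u$, condition (1) of Lemma~\ref{lem:eig} holds once $\mu$ is not also a symmetric eigenvalue, and perfect state transfer becomes the statement that the shifted symmetric eigenvalues $\sigma_i=\la_i-\mu$ are odd multiples of a common $\rho$. The paper likewise normalizes $Q(u)=Q(v)=0$ and reduces to making the ratios $\la_i/\la_{n+1}$ odd/odd rationals, achieved via the inverse function theorem applied to the ratio map $\Phi$ at a point where $D\Phi$ is invertible, plus density of points with all coordinates odd/odd. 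Your submersion criterion (columns of $[w_i(j)^2]$ spanning an $n$-dimensional space meeting $\R\sigma$ only in $0$) is exactly the paper's bordered-determinant condition $\det\tilde M\neq 0$ from Step 1 of the proof of Lemma~\ref{lem:ratios}, with Hellmann--Feynman playing the role of Claim~\ref{cl:implicit}.

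The genuine gap is the step you explicitly defer: exhibiting \emph{one} potential at which the submersion condition actually holds. That is precisely Lemma~\ref{lem:ratios}, and it is the heart of the proof --- the paper spends essentially all of Section 4 (Steps 2 through 6) on it. No soft argument is available: as you observe yourself, the only regime where the spectrum is easy to control (large, well-separated diagonal entries) is degenerate for the ratio map, so one cannot simply pick a convenient base point; and a generic-looking claim of this type can genuinely fail for structured matrices, so it cannot be waved through. The paper's route is to show that $\det\tilde M$, divided by the Vandermonde factor $\Lambda$ and with the elementary symmetric functions of the eigenvalues re-expressed through the coefficients of the characteristic polynomial $F(x,Q)$, becomes a polynomial $T(Q_1,\dots,Q_n)$ that is not identically zero. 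Proving $T\not\equiv 0$ requires identifying its top nonvanishing homogeneous part $T^2$ (the naive top parts $T^0$ and $T^1$ vanish, which is another sign the claim is delicate), establishing the decomposition $T^2_G=\sum_{a\in W}T^2_{G_a}$ over the star-like graphs $G_a$ (Lemma~\ref{lem:T_lin_comb}), and computing explicitly $T^2_{G_a}=\pm 2^{-n}Q_a^{-1}\prod_i Q_i\prod_{i<j}(Q_j-Q_i)$; the common-neighborhood hypothesis enters exactly here, since the sum over $a\in W$ of these linearly independent polynomials cannot vanish. Without an argument of this kind, your proposal proves only the conditional statement ``if the ratio map is a submersion somewhere, then a suitable potential exists,'' not the theorem itself.
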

\addtocounter{theorem}{-1}
}

The strategy of our proof is a perturbation argument. We find a suitable initial choice of the potential and show that in its neighborhood there is a dense set of potentials satisfying the theorem. This is made possible because the number of parameters turns out to be the same as the number of conditions to be satisfied. 

\begin{proof}
By Lemma~\ref{lem:eig} we need to find a potential for which the Hamiltonian satisfies two conditions.  

First we show that if $Q(u)=Q(v)$ then the first condition is automatically satisfied. It is easy to see that for such a potential $H$ has an eigenvalue $\lambda_0=Q(u)=Q(v)$, such that the corresponding eigenvector $\phi_0$ satisfies $\phi_0(u) = -\phi_0(v)$ and $\phi_0(x) = 0$ for $x\neq u,v$.  By a diagonal shift, we can assume without loss of generality that $Q(u)=Q(v)=0$ so that $\lambda_0=0$. Let $\lambda_1 \leq \lambda_2 \leq \dots \leq \lambda_{n+1}$ denote the other eigenvalues, and $\phi_1,...,\phi_{n+1}$ the corresponding orthonormal set of eigenvectors of $H$.  Since each $\phi_j$ must be orthogonal to $\phi_0$ for $j=1,...,n+1$, then we immediately see that $\phi_j(u) = \phi_j(v)$ for $j=1,...,n+1$. 

\begin{definition}\label{def:good}
Let us say that a potential $Q$ is \emph{good} if $Q(u) = Q(v)=0$ and all the eigenvalues of $H$ are simple.  Let $\QQ$ denote the set of good potentials. It is clear that $\QQ$ can be viewed as an open subset of $\R^n$.
\end{definition}

 So far we have shown that if $Q\in \QQ$ then the first condition of Lemma~\ref{lem:eig} is satisfied. 

In particular we see that there is only a single eigenvalue of the second type, so to satisfy the second condition all we need to ensure is existence of a time $t$ such that $e^{i\lambda_j t} = -1$ for each $j$.  That is, we must have $t\lambda_j$ is an odd multiple of $\pi$ for each $j=2,...,n$.  

By Lemma~\ref{lem:ratios} (see below) there is a potential $Q_0$ such that the total derivative of the map $\Phi : \QQ \to \R^n$ defined by
\[\label{eq:phi} \Phi(Q) := \left(\frac{\lambda_1}{\lambda_{n+1}},\dots,\frac{\lambda_n}{\lambda_{n+1}}\right) \] is invertible at $Q_0$. Since the set of points in $\R^n$ whose coordinates are all odd/odd rational numbers is dense, the inverse function theorem guarantees the existence of a potential $Q\in \QQ$ close to $Q_0$ such that 
\[ \Phi(Q) = \left( \frac{2p_1+1}{2q+1}, \dots, \frac{2p_n +1}{2q+1}\right)\]  where $q, p_1,\dots,p_n \in \Z$. For this potential $t = \pi (2q+1)/\lambda_{n+1}$ is a good choice of $t$.
\end{proof}
 
\begin{lemma}\label{lem:ratios}
There is a potential $Q_0 \in \QQ$ such that the total derivative of the map $\Phi$ defined in \eqref{eq:phi} is invertible.
\end{lemma}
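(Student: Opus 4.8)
The plan is to reduce the invertibility of $D\Phi$ to the non-vanishing of a single $(n+1)\times(n+1)$ determinant, and then to exhibit one explicit regime of large potentials in which that determinant is nonzero.

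First I would pass to the symmetric subspace. Since $u$ and $v$ have the same neighbors and $u\not\sim v$, the subspace $\{f : f(u)=f(v)\}$ is $H$-invariant, and collapsing $u,v$ into a single super-vertex $w$ (with the normalization $s=\tfrac1{\sqrt2}(\1_u+\1_v)$) identifies the restriction of $H$ to this subspace with a fixed symmetric matrix minus the diagonal potential,
\[ B(Q) = M - \mathrm{diag}(Q_1,\dots,Q_n,0), \]
where $M$ has zero diagonal, $M_{kw}=\sqrt2$ for each common neighbor $k$ of $u,v$, and $M_{kl}=A_{kl}$ otherwise. The eigenvalues of $B(Q)$ are exactly the symmetric eigenvalues $\la_1<\dots<\la_{n+1}$. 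Because $Q\in\QQ$ forces all $n+2$ eigenvalues of $H$ to be simple and the antisymmetric eigenvalue equals $0$, every $\la_j\neq0$, so $\Phi$ is well-defined and real-analytic on $\QQ$. By the Hellmann--Feynman formula, $\partial\la_j/\partial Q_k = -\phi_j(k)^2$ for $k=1,\dots,n$, where $\phi_j$ is the unit eigenvector for $\la_j$ (the overall sign will be irrelevant below), and these are real-analytic as well.

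Next I would convert invertibility of $D\Phi$ into a determinant condition. Write $\Phi=\pi\circ\Psi$ with $\Psi(Q)=(\la_1,\dots,\la_{n+1})$ and $\pi(x)=(x_1/x_{n+1},\dots,x_n/x_{n+1})$. The differential $D\pi$ is surjective onto $\R^n$ with kernel the radial line $\R\cdot x$, so by the chain rule $D\Phi$ is invertible at $Q_0$ precisely when the $n$ columns of $D\Psi$ together with the vector $(\la_j)_j$ are linearly independent in $\R^{n+1}$. Equivalently, setting
\[ N(Q) = \bigl[\,(\phi_j(k)^2)_{j,\,1\le k\le n}\ \big|\ (\la_j)_j\,\bigr], \]
an $(n+1)\times(n+1)$ matrix, it suffices to find a good potential $Q_0$ with $\det N(Q_0)\neq0$. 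This one determinant simultaneously encodes that $D\Psi$ has full rank $n$ and that the radial direction is not in its image.

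Finally I would produce such a point by sending the potentials to infinity along well-separated values $Q_1\gg Q_2\gg\dots\gg Q_n\gg1$. First- and second-order perturbation theory gives $n$ eigenvalues $\la\approx-Q_k$ whose eigenvectors concentrate on the corresponding vertices, together with one eigenvalue $\nu_0\approx 2\sum_{k\in S}1/Q_k$ (where $S$ is the common neighborhood) whose eigenvector concentrates on $w$, with $\phi(k)^2\approx 2/Q_k^2$. In block form $N$ is then $\left(\begin{smallmatrix}I+o(1)&-\mathbf Q\\ \mathbf r^{T}&\nu_0\end{smallmatrix}\right)$, and a Schur-complement computation gives $\det N\approx \nu_0+\sum_k \phi(k)^2 Q_k = 4\sum_{k\in S}1/Q_k$, which is nonzero since $S\neq\emptyset$. (If $S=\emptyset$ then $u,v$ are isolated, $0$ is a double eigenvalue of $H$, and $\QQ=\emptyset$; so we may assume $S\neq\emptyset$.) The main obstacle is precisely this last step: making the asymptotics rigorous, i.e.\ bounding the off-diagonal and higher-order corrections well enough that the leading term $4\sum_{k\in S}1/Q_k$ dominates $\det N$. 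A clean way to sidestep explicit error bounds is to note that $\det N$ is real-analytic on $\QQ$, so it is enough to show it is not identically zero; the same leading-order computation then only needs to be justified as a genuine limit along the chosen ray.
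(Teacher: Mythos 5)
Your proposal is correct, and while its opening reduction coincides with the paper's, the heart of the argument is genuinely different. The reduction is the same in both: the paper massages $D\Phi$ into an $(n+1)\times(n+1)$ matrix $\tilde{M}$ with $\tilde{M}_{i,j}=\partial_j\la_i$ for $j\le n$ and $\tilde{M}_{i,n+1}=\la_i$, which is exactly your $N$ (your Hellmann--Feynman entries $-\phi_i(j)^2$ are the paper's $F_j(\la_i,Q)/F'(\la_i,Q)$, and the sign change across $n$ columns is harmless), so both proofs must exhibit one good potential with $\det N \neq 0$. The divergence is in how that non-vanishing is established. The paper stays purely algebraic: it clears denominators, divides by the Vandermonde factor $\Lambda$, substitutes the coefficients of the characteristic polynomial for the elementary symmetric functions of the $\la_i$'s to obtain a polynomial $T(Q_1,\dots,Q_n)$, shows $T^0=T^1=0$, proves the structural identity $T^2_G=\sum_{a\in W}T^2_{G_a}$ reducing to graphs with a single common neighbor, and computes $T^2_{G_a}$ explicitly. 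You instead argue asymptotically: along a ray of large, well-separated potentials, perturbation theory plus a Schur complement give $\det N = 4\sum_{k\in S}Q_k^{-1}+O(Q^{-2})\neq 0$ (your $S$ is the paper's $W$). Reassuringly the two computations agree: the paper's $T^2_G=\sum_{a\in W}\pm Q_a^{-1}\prod_i Q_i\prod_{i<j}(Q_j-Q_i)$ is, up to the factors removed in its normalization, exactly your leading term --- the top surviving homogeneous part of a polynomial is precisely what dominates in the large-$Q$ limit. Your route buys brevity and physical transparency (the common neighborhood enters through second-order perturbation of the eigenvalue near $0$, with no symmetric-function bookkeeping); its cost is the analytic justification you flag, which is a real gap in the write-up but a routine one to close: take $Q_k=c_kQ$ with distinct $c_k>0$, write $B(Q)/Q=-\mathrm{diag}(c_1,\dots,c_n,0)+\epsilon M$ with $\epsilon=1/Q$, and invoke analytic perturbation theory for a symmetric matrix with simple unperturbed spectrum; then every entry of $N$ has a convergent expansion in $\epsilon$, adding $Q_k$ times column $k$ to the last column for each $k\le n$ shows the errors are $O(Q^{-2})$ against a leading term of order $Q^{-1}$, and the same expansion gives $\nu_0>0$, so the chosen potentials genuinely lie in $\QQ$. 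Your closing appeal to real-analyticity of $\det N$, by contrast, buys nothing: ``not identically zero'' is precisely ``nonzero at some point,'' which is what the limit computation must (and does) deliver. A small bonus of your version: observing that $S=\emptyset$ forces $\QQ=\emptyset$ makes explicit the nonempty-common-neighborhood assumption that the paper uses only tacitly, e.g.\ in its proof that $\QQ$ is non-empty (Lemma~\ref{lem:dense}).
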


Before we prove this lemma, we need to make some preparations. Let us denote the vertices in $V(G)\setminus \{u,v\}$ by $1,2,\dots,n$. For any potential $Q \in \QQ$ let us write $Q_i = Q(i)$. We know that $0$ is an eigenvalue of the Hamiltonian corresponding to the potential $Q$, so we can write its characteristic polynomial as $2x \cdot F(x,Q)$ where $F$ is a polynomial in $x$ and the $Q_j$s. 
By some abuse of notation we are going to use the following shorthands for derivatives of $F$:  we let $F'(x,Q) = \partial_x F(x,Q)$ and $F_j(x,Q) = \partial_{Q_j}F(x,Q)$.  

\begin{lemma}\label{lem:dense}
The set $\QQ$ is non-empty.

\end{lemma}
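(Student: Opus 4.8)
The plan is to produce a single explicit good potential. Since, with $Q(u)=Q(v)=0$ fixed, the entries $(Q_1,\dots,Q_n)$ range freely over $\R^n$ and the characteristic polynomial of $H$ is $2x\,F(x,Q)$, the set $\QQ$ is exactly the complement of the zero set of the polynomial $F(0,Q)\cdot\operatorname{disc}_x F(x,Q)$; hence exhibiting one witness both proves $\QQ\neq\emptyset$ and shows it is dense. The first step is to record the block structure coming from the $u,v$-symmetry. Conjugating $H$ by the rotation that sends $e_u,e_v$ to $(e_u\pm e_v)/\sqrt2$ splits off the antisymmetric vector $(e_u-e_v)/\sqrt2$ — this is $\phi_0$, the eigenvector with eigenvalue $0$ — and leaves an $(n+1)\times(n+1)$ symmetric block $M$ acting on the span of $s:=(e_u+e_v)/\sqrt2$ and $e_1,\dots,e_n$. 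A direct computation (using $u\not\sim v$ and $Q(u)=Q(v)=0$) shows $M$ is the Hamiltonian of the graph on $\{1,\dots,n\}$ with one extra vertex $s$ joined to each common neighbor of $u,v$ by an edge of weight $\sqrt2$, with potential $0$ on $s$. The roots of $F$ are precisely the eigenvalues of $M$, so a potential is good exactly when $M$ has $n+1$ \emph{distinct, nonzero} eigenvalues.

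For the witness I would choose the $Q_i$ so that the diagonal entries $d_1,\dots,d_n$ of $M$ at vertices $1,\dots,n$ equal $N^1,\dots,N^n$ for a large parameter $N$ (the $s$-entry is always $0$). The off-diagonal entries of $M$ are bounded by $\sqrt2$ uniformly in $N$, so each Gershgorin disc has radius at most a constant depending only on the maximum degree of $G$. For $N$ large the discs centered at $d_1,\dots,d_n$ are pairwise disjoint and disjoint from the disc centered at $0$; by the connected-component refinement of Gershgorin's theorem, $M$ then has exactly one eigenvalue in each disc. This gives $n$ eigenvalues near $N^1,\dots,N^n$ that are mutually distinct and bounded away from $0$, plus one further eigenvalue lying in the disc about $0$.

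The only delicate point — and the main obstacle — is to show this last eigenvalue is nonzero, i.e. that $0\notin\operatorname{spec}(M)$; this is exactly the statement that separates the two eigenvalues of $H$ clustering near $0$, one of which (from $\phi_0$) is pinned at $0$. I would settle it by the Schur complement of the $s$-vertex: writing $M=\left(\begin{smallmatrix} D & c\\ c^{T}& 0\end{smallmatrix}\right)$ with $D=\operatorname{diag}(d_1,\dots,d_n)+A'$ ($A'$ the fixed adjacency among $1,\dots,n$) and $c$ the weight-$\sqrt2$ coupling vector to the common neighbors, one has $\det M=-\det(D)\,c^{T}D^{-1}c$. For $N$ large $D$ is diagonally dominant, hence invertible with $\det(D)\neq0$ and $D^{-1}=\operatorname{diag}(1/d_i)+(\text{lower order})$, so $c^{T}D^{-1}c=\sum_{w\sim u}2/d_w+o(1)\neq 0$; thus $\det M\neq0$. (This computation also makes transparent why $u$ and $v$ must have a common neighbor — so that $c\neq 0$ — which is indeed necessary: if $u,v$ were isolated then $0$ would be a double eigenvalue of $H$ and $\QQ$ would be empty.) Combining the two analyses, for $N$ sufficiently large all $n+1$ eigenvalues of $M$ are simple and nonzero, so the corresponding potential lies in $\QQ$, proving $\QQ\neq\emptyset$.
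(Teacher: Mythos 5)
Your proof is correct, and its starting point --- taking the potential values large and pairwise distinct so that $H$ becomes a perturbation of a diagonal matrix --- is exactly the paper's; the difference is that the paper's entire proof is two sentences (``Take the values of $Q$ to be large, distinct real numbers. Then $H$ can be thought of as a small perturbation of a diagonal matrix, and the lemma becomes clear.''), and what you have written is a rigorous completion of that idea rather than a different route. The point you isolate as ``the only delicate point'' is precisely what the paper's one-liner glosses over: since $Q(u)=Q(v)=0$ is forced, the limiting diagonal matrix has the entry $0$ \emph{twice}, so Gershgorin/perturbation only yields $n$ simple eigenvalues near the large diagonal entries plus a near-degenerate pair near $0$, and simplicity of that pair is not automatic. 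Your symmetric/antisymmetric splitting pins one member of the pair at exactly $0$, and the Schur-complement identity $\det M=-\det(D)\,c^{T}D^{-1}c$ shows the other member is nonzero --- this is the genuinely nontrivial step, and it is absent from the paper. Your parenthetical remark is also substantive: the argument needs $c\neq 0$, i.e.\ $u$ and $v$ must have at least one common neighbor, and indeed if they do not then $0$ is a double eigenvalue for every admissible potential, $\QQ=\emptyset$, and the lemma (and Theorem~\ref{thm:nbhd}) fail; this hypothesis is nowhere stated in the paper but is silently used. One repair to your write-up: as literally phrased, ``$c^{T}D^{-1}c=\sum_{w\sim u}2/d_w+o(1)\neq 0$'' proves nothing, because with $d_i=N^i$ the main term itself tends to $0$ (it can be as small as $2N^{-n}$), so a generic $o(1)$ error need not be smaller; either bound the Neumann-series corrections \emph{relative} to the main term, or --- cleaner --- note that for large $N$ the matrix $D$ is diagonally dominant with positive diagonal, hence positive definite, so $c^{T}D^{-1}c>0$ for any $c\neq 0$. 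With that fix your argument is complete, and it even yields more than the statement: your observation that $\QQ$ is the complement of the zero set of the polynomial $F(0,Q)\cdot\operatorname{disc}_{x}F(x,Q)$ shows $\QQ$ is dense in $\R^n$, not merely nonempty and open.
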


\begin{proof}
Take the values of $Q$ to be large, distinct real numbers.  Then $H$ can be thought of as a small perturbation of a diagonal matrix, and the lemma becomes clear.

\end{proof}

The following claim is a variant of the implicit function theorem.

\begin{claim}\label{cl:implicit}
For $Q\in \QQ$ the eigenvalues $\lambda_i$ depend smoothly in the potential near $Q$, and we have \[\partial_j \lambda_i(Q) := \partial_{Q_j} \lambda_i(Q) = \frac{ F_j(\lambda_i(Q), Q)}{F'(\lambda_i(Q),Q)}.\]
\end{claim}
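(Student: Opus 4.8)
The plan is to recognize Claim~\ref{cl:implicit} as a textbook application of the implicit function theorem, where the only substantive input is the simplicity of the spectrum that is already built into the definition of $\QQ$. Fix a potential $Q^0 \in \QQ$ and let $\lambda_i = \lambda_i(Q^0)$ be one of the nonzero eigenvalues, i.e. a root of $F(\,\cdot\,,Q^0)$. First I would record that $F$, being (up to the factor $2x$) the characteristic polynomial $\det(xI - H)$ of a matrix whose entries are affine in $Q_1,\dots,Q_n$, is a polynomial in $x$ and in the $Q_j$'s, hence $C^\infty$ (in fact real-analytic) jointly in $(x,Q)$. This makes the hypotheses of the implicit function theorem automatic except for the non-degeneracy condition.

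The key step is to verify that $F'(\lambda_i, Q^0) \neq 0$, where $F' = \partial_x F$. Since $Q^0 \in \QQ$, all $n+2$ eigenvalues of $H$ are simple, so the characteristic polynomial $2x\,F(x,Q^0)$ has no repeated roots. Because $0$ is a simple root we have $F(0,Q^0)\neq 0$, so $\lambda_i \neq 0$; and since $\lambda_i$ is a simple root of the full characteristic polynomial while being distinct from $0$, it must be a simple root of $F(\,\cdot\,,Q^0)$. That $\lambda_i$ is a simple root is precisely the statement $F'(\lambda_i,Q^0)\neq 0$. This is the crux of the whole claim, and it is exactly the reason the definition of $\QQ$ insists on a simple spectrum: at an eigenvalue crossing the branches of the spectrum can fail to be differentiable, so this non-degeneracy is what buys us smoothness.

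With $F(\lambda_i, Q^0) = 0$ and $\partial_x F(\lambda_i, Q^0) \neq 0$, the implicit function theorem yields a neighborhood of $Q^0$ and a unique smooth function $Q \mapsto \lambda_i(Q)$ solving $F(\lambda_i(Q), Q) \equiv 0$ with $\lambda_i(Q^0)$ equal to the chosen eigenvalue. Carrying this out for each of the $n+1$ roots labels the branches of the spectrum smoothly near $Q^0$; since the eigenvalues are distinct at $Q^0$ and depend continuously on $Q$, they remain distinct (so the branches never collide) throughout a small enough neighborhood. Finally, differentiating the identity $F(\lambda_i(Q), Q) \equiv 0$ with respect to $Q_j$ by the chain rule gives $F'(\lambda_i(Q), Q)\,\partial_j \lambda_i(Q) + F_j(\lambda_i(Q), Q) = 0$, and solving for $\partial_j \lambda_i(Q)$ produces the stated formula $F_j(\lambda_i(Q),Q)/F'(\lambda_i(Q),Q)$, up to the sign fixed by the convention for how the potential enters the diagonal of $H$.

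I expect no genuine obstacle beyond the bookkeeping in the non-degeneracy step: the mathematical content of the claim is entirely the translation of \emph{simple eigenvalue} into $F' \neq 0$, after which the implicit function theorem and a one-line chain-rule computation do all the remaining work.
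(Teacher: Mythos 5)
Your proof is correct and coincides with the paper's approach: the paper gives no argument beyond calling the claim ``a variant of the implicit function theorem,'' and your chain of reasoning---simplicity of the spectrum of $H$ forces each $\lambda_i$ to be a simple root of $F(\cdot,Q)$, hence $F'(\lambda_i,Q)\neq 0$, after which the implicit function theorem and the chain rule finish---is exactly that intended argument. One caveat on the sign: differentiating $F(\lambda_i(Q),Q)\equiv 0$ yields $\partial_j\lambda_i=-F_j(\lambda_i,Q)/F'(\lambda_i,Q)$, and this minus sign is intrinsic rather than ``fixed by the convention for how the potential enters the diagonal of $H$'' as you suggest (any sign change in the convention cancels between $F_j$ and $F'$); so the claim as printed carries a harmless sign typo---visible in the paper's own example, where $\lambda_i=-Q_i$ while $F_i(\lambda_i,Q)/F'(\lambda_i,Q)=+1$---which is immaterial downstream since only the non-vanishing of $\det D\Phi$ is ever used.
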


\subsection{The proof of Lemma~\ref{lem:ratios}}

We proceed in a straightforward manner. 
First we compute formally the total derivative matrix $D\Phi$ of $\Phi$ at a potential $Q \in \QQ$. Then we express $\det D\Phi$ as the product of a non-zero term and a polynomial in the values of the potential. Then we show that this polynomial is not identically zero, and thus there is a choice of $Q$ for which $D\Phi$ is invertible.

\paragraph{Step 1: computing the total derivative.}

Let us compute
\[ (D\Phi)_{i,j} = \partial_j \frac{\lambda_i}{\lambda_{n+1}} =  \frac{ (\partial_j \lambda_i)\lambda_{n+1} - \lambda_i \partial_j \lambda_{n+1}}{\lambda_{n+1}^2}.\]
We can multiply each element of $D\Phi$ by $\lambda_{n+1}$, since that doesn't change whether $\det D\Phi$ is zero. Let us next append an $n+1$st row to this matrix whose $j$th element is $\partial_j \lambda_{n+1}$ and an $n+1$st column in which all elements are 0 except for the last one which is $\lambda_{n+1}$. This still does not change whether the determinant is 0. Finally, for each $i=1,\dots,n$, add  $\lambda_i / \lambda_{n+1}$ times the last row to the $i$th row. This does not change the determinant. 

Now we have arrived at an $(n+1)\times(n+1)$ matrix $\tilde{M}$, whose determinant is zero if and only if $\det D\Phi$ was zero, and whose entries are
\[ \tilde{M}_{i,j} = \left\{ \begin{array}{lcl} \partial_j \lambda_i & : & j\leq n \\ \lambda_i & : & j = n+1  \end{array}\right.\]
Our goal is to show that $\det \tilde{M} \neq 0$. By Claim~\ref{cl:implicit} we have $\partial_j \lambda_i = F_j(\lambda_i, Q)/F'(\lambda_i,Q)$. The denominator is non-zero because of the assumption that $H$ has simple eigenvalues. Thus we can multiply the $i$th row of $\tilde{M}$ by $F'(\lambda_i,Q)$ without changing whether the determinant is 0. Let $M$ denote the matrix obtained this way. Thus
\begin{eq}{eq:mij} M_{i,j} = \left\{ \begin{array}{lcl} F_j(\lambda_i,Q) & : & j\leq n \\ \lambda_i F'(\lambda_i,Q) & : & j = n+1  \end{array}\right.\end{eq}
So it suffices to show that $\det M \neq 0$ for some choice of $Q \in \QQ$. 

\paragraph{Step 2: expressing $\det M$ as a polynomial in $Q$.} 

Let us first consider $\det M$ as a polynomial in the variables $\lambda_1,\dots,\lambda_{n+1},Q_1,\dots,Q_n$. As such, it is clearly alternating in the $\lambda_i$s, so by the fundamental theorem of alternating polynomials, it can be written as a product of a polynomial symmetric in the $\lambda_i$s and $\Lambda = \prod_{i < k} (\lambda_k - \lambda_i)$. By the simple eigenvalue assumption $\Lambda \neq 0$, so it suffices to show that $P(\lambda_1,\dots,\lambda_{n+1},Q_1,\dots,Q_n) = \det M / \Lambda \neq 0$. 

Notice that the various elementary symmetric polynomials of the $\lambda_i$s are exactly the coefficients of the polynomial $F(x,Q)$, hence are themselves polynomials in the $Q_j$s. Substituting the appropriate expressions into $P$ we get a new polynomial $T(Q_1,\dots,Q_n)$ whose value coincides with $P(\lambda_1,\dots,\lambda_{n+1},Q_1,\dots,Q_n) = \det M / \Lambda$.

Thus it suffices to show that $T(Q_1,\dots,Q_n) \neq 0$ for some choice of $Q \in \QQ$. Since $\QQ$ is a non-empty open set, this is equivalent to showing that $T$ is not the identically 0 polynomial. We are going to show this by expressing fairly explicitly the highest degree term in $T$. This will be done in multiple steps. First we compute the top degree parts of $P$ and then analyze what happens after the substitution. 

\paragraph{Step 3: computing the top degree parts of the polynomial $P$.}

Let us start by examining the polynomial $F(x,Q)$. After applying Gaussian elimination to the row and column corresponding to $u$ in the Hamiltonian, we get that 
\begin{eq}{eq:Fdet} F(x,Q) = \det \left[\begin{array}{c|ccccc}
x/2 &&& w^T &&\\\hline &Q_1+x&&\ &&\\&&Q_2+x&&?&\\w&&&\ddots&\\&&?& &Q_{n-1}+x&\\&&&&&Q_{n}+x
\end{array}\right]
\end{eq} where $w$ is a 0-1 vector having 1s exactly at the neighbors of $u$ and $v$. Let $W$ denote the set of neighbors of $u$. So $w_j = 1 \leftrightarrow j \in W$. Let $E$ denote the set of edges of $G$ not incident to $u$ or $v$. The degree of $F$ as an element of $\Q[x,Q_1,\dots,Q_n]$ is $n+1$. 

\begin{definition}
For any polynomial $J$ of (hypothetical) degree $d$ let us denote by $J^c$ its degree $d-c$ homogeneous part. Since we will not consider powers of polynomials, this should not lead to confusion.
\end{definition}

It is clear from \eqref{eq:mij} and \eqref{eq:Fdet} that the degree of $F_j$ is $n$, so $\det M$ has degree $n^2+n+1$. Then $P$ has degree $n^2+n+1-n(n+1)/2 = n(n+1)/2+1$. Using the above notation, it is clear from \eqref{eq:Fdet} that 
\[ F^0(x,Q) = x/2 \prod_{j=1}^n (x+Q_j),\]  $F^1 = 0$, and 
\begin{eq}{eq:f2} F^2(x,Q) =  - \sum_{(ab) \in E(G)} x/2 \prod_{s \neq a,b} (x+Q_s) - \sum_{a \in W} \prod_{s\neq a} (x+Q_s). \end{eq}
Then the entries of $M$ also naturally split according to their homogeneous degrees. However, we are interested in computing the homogeneous parts of $\det M$. For this reason, let us introduce the following matrices: $M^0$ is the matrix consisting of the top degree part of each entry of $M$.
\begin{eq}{eq:m0}
M^0_{i,j} = \left\{ \begin{array}{llll} F^0_j(\lambda_i,Q) &= \lambda_i/2 \prod_{s \neq j} (\lambda_i+Q_s) & : & j\leq n \\ \lambda_i \cdot(F^0)'(\lambda_i,Q) &= \lambda_i^2/2 \sum_j \prod_{s\neq j} (\lambda_i+Q_s) +\lambda_i \prod(\lambda_i+Q_j)& : & j = n+1  \end{array}\right.
\end{eq}
Then clearly $(\det M)^0 = \det( M^0)$. Since $F^1 = 0$, the same will hold for all its derivatives and hence for all entries of $M$. Thus it also holds for the determinant: $(\det M)^1 = 0$. Next we compute $(\det M)^2$. This is obtained by keeping the top degree part from each entry in $M$ except for the entries in a single column, where we replace them by the second highest degree part. So for any $1 \leq k \leq n+1$ we introduce the matrix $M^{2(k)}$ that has entries
\begin{eq}{eq:m2k} 
M^{2(k)}_{i,j} = \left\{ \begin{array}{lll} M^0_{i,j} & : & j \neq k \\ F^2_j(\lambda_i,Q) & : & j = k \leq n \\ \lambda_i \cdot (F^2)'(\lambda_i,Q) & : & j = k =n+1
 \end{array}\right.
 \end{eq}
Using this notation we get, by the multi-linearity of the determinant as a function of columns, that
\begin{eq}{eq:detm2} (\det M)^2 = \sum_{k=1}^{n+1} \det M^{2(k)}.\end{eq} 
It is clear that $\det M^0$, as well as all the $\det M^{2(k)}$s are alternating in the $\lambda_i$s, so they are all divisible by $\Lambda$. Thus we get that the top degree parts of the polynomial $P = \det M /\Lambda$ are 
\begin{eq}{eq:P^c} P^0 = \frac{\det M^0}{\Lambda} \mbox{; } P^1 = 0 \mbox{; and } P^2 = \frac{\sum_k \det M^{2(k)}}{\Lambda}.\end{eq} Both $P^0$ and $P^2$ are symmetric in the $\lambda_i$s.

\paragraph{Step 4: the substitution.}
Finally, let us consider what happens when we substitute the coefficients of $F(x,Q)$ in place of the elementary symmetric polynomials in the $\lambda_i$s. Let us write
\[ F(x,Q) = x^{n+1} + \sum_{k=1}^{n+1} (-1)^k S_k(Q) x^{n+1-k}\] where $S_k \in \Q[Q_1,\dots,Q_n]$. Then, since the $\lambda_i$s are exactly the roots of $F(x,Q)$, we get that  \[ \sigma_k(\lambda_1,\dots,\lambda_{n+1}) = S_k(Q)\] where $\sigma_k$ is the $k$th elementary symmetric polynomial. Furthermore, it follows from a careful but straightforward examination of \eqref{eq:Fdet} that 
\begin{eq}{eq:sk0} S_k^0 = \frac{(-1)^k}{2} \sigma_k(Q_1,\dots,Q_n),\end{eq}
\begin{eq}{eq:sk1} S_k^1 = (-1)^{k+1} \sum_{a \in W} \sigma_{k-1}(Q_1, \dots, Q_{a-1},Q_{a+1},\dots,Q_n).\end{eq}
Note that in particular we have 
\begin{eq}{eq:sn+1} S_{n+1}^0 = S_{n+1}^1 = 0,\end{eq} since the $n+1$st (respectively the $n$th) symmetric polynomial of $n$ (respectively $n-1$) variables is 0.

To analyze the substitution, let us denote the space of polynomials symmetric in the $\lambda_i$s by $\Q^{sym}[\lambda_1,\dots,\lambda_{n+1},Q_1,\dots,Q_n]$, and define the map 
\begin{align*}
\Psi : \Q^{sym}[\lambda_1,\dots,\lambda_{n+1},Q_1,\dots,Q_n] &\to \Q[Q_1,\dots,Q_n] \\
 \sigma_k(\lambda_1,\dots,\lambda_{n+1})\cdot J(Q) &\mapsto S_k \cdot J(Q).
\end{align*}

As $P$ had degree $n(n+1)/2 +1$, we consider $T= \Psi(P)$ as a hypothetical degree $n(n+1)/2+1$ polynomial as well, (where the leading coefficient may be 0). Equation \eqref{eq:m0} shows that $M^0_{i,j}$ is divisible by $\lambda_i$ for all $j$. Hence $\det M^0$ is divisible by $\prod \lambda_i$. Thus $P^0$ is also divisible by $\prod \lambda_i$. Let us write $P^0 = \prod \lambda_i \cdot R$ where $R$ has degree $n(n+1)/2-n$. Then $\Psi(P^0) = \Psi(\prod \lambda_i) \cdot \Psi(R) = S_{n+1} \Psi(R)$.

\begin{claim}
$T^0 = T^1 = 0.$
\end{claim}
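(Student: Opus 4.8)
## Proof Proposal for the Claim $T^0 = T^1 = 0$

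The plan is to control total degrees through the substitution map $\Psi$ and to cash in the two-fold vanishing $S_{n+1}^0 = S_{n+1}^1 = 0$ recorded in \eqref{eq:sn+1}. The key preliminary observation is that $\Psi$ cannot raise total degree. Grade $\Q^{sym}[\lambda_1,\dots,\lambda_{n+1},Q_1,\dots,Q_n]$ by assigning $\sigma_k(\lambda_1,\dots,\lambda_{n+1})$ the weight $k$; then a monomial $\sigma_{k_1}\cdots\sigma_{k_m}Q^\alpha$ has total degree $k_1+\dots+k_m+|\alpha|$, and $\Psi$ sends it to $S_{k_1}\cdots S_{k_m}Q^\alpha$, which has degree at most $k_1+\dots+k_m+|\alpha|$ because $\deg S_k\le k$ by \eqref{eq:sk0}. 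Here one should note explicitly that $\Psi$ is the $\Q[Q]$-algebra homomorphism determined by $\sigma_k\mapsto S_k$, so it is multiplicative and degree-non-increasing; this is the one point that must be stated with care. Consequently, for every homogeneous part $P^c$ we have $\deg\Psi(P^c)\le\deg P^c = n(n+1)/2+1-c$.

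First I would use this to isolate which homogeneous parts of $P$ can contribute to $T^0$ and $T^1$. Writing $T=\Psi(P)=\sum_c\Psi(P^c)$ and using the degree bound above, only $\Psi(P^0)$ can reach total degree $n(n+1)/2+1$, while only $\Psi(P^0)$ and $\Psi(P^1)$ can reach degree $n(n+1)/2$. Since $P^1=0$, this already shows that $T^0=\Psi(P^0)^0$ and $T^1=\Psi(P^0)^1$; in other words both quantities are governed entirely by $\Psi(P^0)$.

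Next I would bound the degree of $\Psi(P^0)$ itself. Recall $P^0=\prod_i\lambda_i\cdot R=\sigma_{n+1}R$ with $\deg R=n(n+1)/2-n$, so multiplicativity of $\Psi$ gives $\Psi(P^0)=S_{n+1}\,\Psi(R)$. By \eqref{eq:sn+1} the top two homogeneous parts of $S_{n+1}$ vanish, so $\deg S_{n+1}\le n-1$; combined with $\deg\Psi(R)\le\deg R=n(n+1)/2-n$ this yields
\[ \deg\Psi(P^0)\le (n-1)+\left(\frac{n(n+1)}{2}-n\right)=\frac{n(n+1)}{2}-1. \]
This degree bound is the crux of the argument: since $T^0$ and $T^1$ are the degree-$(n(n+1)/2+1)$ and degree-$(n(n+1)/2)$ parts of $\Psi(P^0)$, and both of these degrees exceed $n(n+1)/2-1$, they must both vanish, giving $T^0=T^1=0$. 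I do not expect a genuine obstacle beyond the bookkeeping; the whole claim hinges on the fact that $S_{n+1}$ loses its top two degrees via \eqref{eq:sn+1}, which is exactly what pushes $\Psi(P^0)$ safely below the two degrees where $T^0$ and $T^1$ live.
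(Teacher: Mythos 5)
Your proof is correct and follows essentially the same route as the paper: both arguments combine $P^1=0$, the fact that $\Psi$ does not increase degree, and the factorization $\Psi(P^0)=S_{n+1}\Psi(R)$ together with $S_{n+1}^0=S_{n+1}^1=0$ from \eqref{eq:sn+1} to push every contribution below degree $n(n+1)/2$. Your explicit weighted-grading justification that $\Psi$ is a degree-non-increasing $\Q[Q]$-algebra homomorphism is a slightly more careful statement of what the paper records in Claim~\ref{clm:psi0}, but the substance is identical.
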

\begin{proof}
 From \eqref{eq:sn+1} we see that $S_{n+1}$ has actual degree at most $n-1$, and thus $\Psi(P^0)$ has actual degree at most $n(n+1)/2-1$, so it does not contribute to $T^0$ and $T^1$. On the other hand we have seen that $P^1 = 0$, and for any $c \geq 2$ the actual degree of $\Psi(P^c)$ is also at most $n(n+1)/2 - 1$, so none of these contribute to $T^0$ and $T^1$. The claim follows. 
\end{proof}

\paragraph{Step 5: expressing $T^2$ as a linear combination.} In this step we study the dependence of $T^2 = T^2_G$ on the graph $G$. For any $1 \leq a \leq n$ let $G_a$ be the graph on $u,v,1,2,\dots, n$ that has only two edges: $(ua)$ and $(va)$. 
To this graph we can associate the polynomial $T^2_{G_a}$ analogously to the definition of $T^2_G$. The goal of this step is to show the following.

\begin{lemma}\label{lem:T_lin_comb}
Let $W$ denote the set of neighbors of $u$ in $G$. Then 
\[ T^2_G = \sum_{w\in W} T^2_{G_a}.\]
\end{lemma}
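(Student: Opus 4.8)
The plan is to isolate, within $T^2_G$, exactly the pieces that depend on $G$, and to show that these are additive over the common neighborhood $W$ while the internal edges contribute nothing. Write $N = n(n+1)/2+1$ for the hypothetical degree of $T$, so that $T^2$ is its degree-$(N-2)$ homogeneous part, and recall from \eqref{eq:f2} the splitting $F^2 = F^2_E + F^2_W$ into an edge part and a neighborhood part. The structural fact that drives everything is that every summand of $F^2_E$ carries the corner factor $x/2$ coming from the $u$-row of \eqref{eq:Fdet}, whereas no summand of $F^2_W$ does.

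First I would determine which homogeneous pieces reach degree $N-2$. Since $P^1 = 0$ and, for $c \ge 3$, $\Psi(P^c)$ has degree at most $N-c < N-2$, only $\Psi(P^0)$ and $\Psi(P^2)$ can contribute. For the first, write $P^0 = \sigma_{n+1}\,R$ (possible since $\prod_i \lambda_i \mid P^0$), where $R$ is graph-independent because $P^0$ is assembled from $F^0$ alone; then $\Psi(P^0) = S_{n+1}\,\Psi(R)$, and since $S_{n+1}^0 = S_{n+1}^1 = 0$ by \eqref{eq:sn+1} the top degree of $S_{n+1}$ is $n-1$, so the degree-$(N-2)$ part of $\Psi(P^0)$ is $S_{n+1}^2\,\Psi^0(R)$. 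Here $\Psi^0$ denotes the degree-preserving homomorphism $\sigma_k \mapsto S_k^0$, and $S_{n+1}^2 = (-1)^n \sum_{a \in W} \prod_{s \neq a} Q_s$ is linear in $W$. For the second, the degree-$(N-2)$ part of $\Psi(P^2)$ is $\Psi^0(P^2)$, which by multilinearity of the determinant in the columns of the $M^{2(k)}$ is linear in $F^2 = F^2_E + F^2_W$.

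The crux, and the step I expect to be the main obstacle, is to show that the edge part contributes nothing, i.e. that $\Psi^0(P^2_E) = 0$, where $P^2_E$ is the part of $P^2$ built from $F^2_E$. Here I would use that every summand $-(x/2)\prod_{s \neq a,b}(x+Q_s)$ of $F^2_E$ retains the factor $x$, so that in any $M^{2(k)}$ whose $k$-th column is built from $F^2_E$, row $i$ of every column is divisible by $\lambda_i$: the $M^0$-columns already have this property, and the $F^2_E$-column inherits it from the factor $x$ (or, when $k = n+1$, from the explicit prefactor $\lambda_i$). Consequently each such $\det M^{2(k)}$ is divisible by both $\Lambda$ and $\prod_i \lambda_i$, so $P^2_E$ is divisible by $\sigma_{n+1} = \prod_i \lambda_i$. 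Writing $P^2_E = \sigma_{n+1}\,J$ with $J$ symmetric, the multiplicativity of $\Psi^0$ gives $\Psi^0(P^2_E) = S_{n+1}^0 \cdot \Psi^0(J) = 0$, again because $S_{n+1}^0 = 0$. Thus $F^2_E$ drops out of $T^2$ altogether, which is precisely why the graphs $G_a$ in the statement need no internal edges.

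It remains to assemble the surviving terms. Combining the two contributions gives
\[ T^2_G = S_{n+1}^2(G)\cdot \Psi^0(R) + \Psi^0\bigl(P^2_W(G)\bigr), \]
where $\Psi^0$ and $R$ are the same for every graph on $u,v,1,\dots,n$, while $S_{n+1}^2(G) = \sum_{a \in W} S_{n+1}^2(G_a)$ and, by linearity of $P^2$ in $F^2_W = -\sum_{a \in W}\prod_{s \neq a}(x+Q_s)$, also $P^2_W(G) = \sum_{a \in W} P^2_W(G_a)$. Each $G_a$ has neighborhood $\{a\}$ and no internal edges, so running the identical computation for $G_a$ reproduces exactly the $a$-summand of each term; summing over $a \in W$ recovers the displayed identity and yields $T^2_G = \sum_{a \in W} T^2_{G_a}$, as claimed.
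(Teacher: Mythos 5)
Your proposal is correct and follows essentially the same route as the paper's proof: the same decomposition $T^2 = S_{n+1}^2\,\Psi^0(R^0) + \Psi^0(P^2)$, the same key observation that every edge term of $F^2$ retains a factor of $x$, forcing the corresponding determinants to be divisible by $\prod_i \lambda_i$ and hence annihilated by $\Psi^0$ since $S_{n+1}^0 = 0$, and the same linearity over $W$ to reassemble $\sum_{a\in W} T^2_{G_a}$. The only differences are cosmetic (grouping the edge summands as $F^2_E$ rather than indexing them individually, and folding the $k=n+1$ column into the same divisibility argument instead of treating it separately).
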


\begin{proof}
Let us introduce
\begin{align*}
\Psi^0 : \Q^{sym}[\lambda_1,\dots,\lambda_{n+1},Q_1,\dots,Q_n] &\to \Q[Q_1,\dots,Q_n] \\
 \sigma_k(\lambda_1,\dots,\lambda_{n+1})\cdot J(Q) &\mapsto S_k^0 \cdot J(Q).
\end{align*}

\begin{claim} The map $\Psi^0$ is independent of the original graph, since by \eqref{eq:sk0} none of the $S_k^0$s depend on the actual graph structure.
\end{claim}

\begin{claim}\label{clm:psi0}
For any polynomial $J \in \Q^{sym}[\lambda_1,\dots,\lambda_{n+1},Q_1,\dots,Q_n]$ we have 
\[ (\Psi(J))^0 = \Psi^0 (J^0).\] 
\end{claim}

\begin{proof}
Since the degree of $S_k$ is equal to the degree of $\sigma_k(\lambda_1,\dots,\lambda_{n+1})$, the map $\Psi$ does not increase the homogeneous degree, and since the degree of $S_k - S_k^0$ is strictly less than $k$, we get that $\Psi - \Psi^0$ strictly decreases the homogeneous degree. Thus the top degree part of $\Psi(J)$ has to coincide with $\Psi^0(J^0)$.
\end{proof}

We can write $P = P^0 + P^2 + \sum_{c > 2} P^c = \prod \lambda_i \cdot R + P^2 + \sum_{c>2} P^c$. Then 
\[T = \Psi(P) = \Psi(\prod \lambda_i \cdot R)  + \Psi(P^2) + \sum_{c>2} \Psi(P^c) = S_{n+1} \Psi(R) + \Psi(P^2) + \sum_{c>2} \Psi(P^c).\] Clearly $\sum_{c>2} \Psi(P^c)$ does not contribute to $T^2$. We have seen that $S^0_{n+1} = S^1_{n+1}= 0$ so the actual degree of $S_{n+1}$ is at most $n-1$, hence by Claim~\ref{clm:psi0} the contribution of $S_{n+1}\Psi(R)$ to $T^2$ is exactly $S^2_{n+1}\Psi^0(R^0)$ and again by Claim~\ref{clm:psi0} the contribution of $\Psi(P^2)$ to $T^2$ is exactly $\Psi^0(P^2)$. Thus we get that 

\begin{eq}{eq:t2} 
T^2 = S^2_{n+1} \Psi^0(R^0) + \Psi^0(P^2)
\end{eq}

Let us first study the first term of this expression. We have seen that $P^0$ doesn't depend on the graph, and hence $R^0$ doesn't either. $\Psi^0$ is also independent of the graph. We know that $S_n$ is equal (up to sign) to the constant term of $F(x,Q)$, that is simply $F(0,Q)$. From \eqref{eq:Fdet} it is easily seen by expanding the determinant that 
\[ S_{n+1}^2 = (-1)^n \sum_{a \in W} \prod_{s \neq a} Q_s.\] 

Next, let us look at the second term of \eqref{eq:t2}. According to \eqref{eq:detm2} \[ P^2 = \frac{(\det M)^2 }{\Lambda} = \frac{\sum_{k=1}^{n+1} \det M^{2(k)}}{\Lambda}.\]  It can be immediately seen from \eqref{eq:m2k} that each entry in the $i$th row of $M^{2(n+1)}$ is divisible by $\lambda_i$, hence $\prod \lambda_i$ divides $\det M^{2(n+1)}$, thus $\Psi^0(\det M^{2(n+1)}/\Lambda) = 0$. 

For $k \leq n$ let us further decompose $M^{2(k)}$ according to \eqref{eq:f2} and \eqref{eq:m2k}.  Let us write $F^{2(a,b)}(x,Q) = - x/2 \prod_{s\neq a,b} (x+Q_s)$ and $F^{2(a)}(x,Q) = -\prod_{s\neq a} (x+Q_s)$. Then \[ F^2(x,Q) = \sum_{(ab) \in E(G)} F^{2(a,b)}(x,Q) + \sum_{a\in W} F^{2(a)}(x,Q).\] Define  $M^{2(k)(a,b)}$ to be the matrix that coincides with $M^0$ in all columns except the $k$th, and whose entries in the $k$th column are $F_k^{2(a,b)}(\lambda_i, Q)$. Similarly let $M^{2(k)(a)}$ be the matrix that coincides with $M^0$ in all but the $k$th column, where the entries are $F_k^{2(a)}(\lambda_i,Q)$. Thus
\[ M^{2(k)} = \sum_{(a,b) \in E(G)} M^{2(k)(a,b)} + \sum_{a \in W} M^{2(k)(a)},\] and since these matrices only differ in their $k$th column, we get 
\[ \det M^{2(k)} = \sum_{(a,b) \in E(G)} \det M^{2(k)(a,b)} + \sum_{a \in W} \det M^{2(k)(a)},\] and thus 
\[ \Psi^0\left( \frac{\det M^{2(k)}}{\Lambda}\right) = \sum_{(a,b) \in E(G)} \Psi^0\left(\frac{\det M^{2(k)(a,b)}}{\Lambda}\right) + \sum_{a \in W} \Psi^0\left(\frac{\det M^{2(k)(a)}}{\Lambda}\right).\] Note, however, that since $F^{2(a,b)}(x,Q)$ is divisible by $x$, so is $F^{2(a,b)}_j(x,Q)$, so each entry in the $i$th row of $M^{2(k)(a,b)}$ is divisible by $\lambda_i$ for $k \leq n$. This means that $\det M^{2(k)(a,b)}$ is divisible by $\prod \lambda_i$, hence $\Psi^0(\det M^{2(k)(a,b)}/\Lambda) = 0$. 

Putting together everything we get that 
\begin{multline}\label{eq:t2final}T^2 = (-1)^n \sum_{a \in W} \prod_{s\neq a}Q_s \cdot \Psi^0(R^0) + \sum_{k=1}^n \sum_{a\in W} \Psi^0\left(\frac{\det M^{2(k)(a)}}{\Lambda}\right)  =\\ = \sum_{a\in W}\left( \sum_{k=1}^n \Psi^0\left(\frac{\det M^{2(k)(a)}}{\Lambda}\right) + (-1)^n \prod_{s\neq a} Q_s \cdot \Psi^0(R^0)\right)\end{multline}

We have seen that $\Psi^0$ and $R^0$ are independent of the graph. From the explicit form of $F^{2(a)}(x,Q) = - \prod_{s\neq a}(x+Q_s)$ we see that it is also independent of the graph. Then the same follows for the polynomial $\det M^{2(k)(a)}$. This implies that 
\[ T^2_{G_a} = \sum_{k=1}^n \Psi^0\left(\frac{\det M^{2(k)(a)}}{\Lambda}\right) + (-1)^n \prod_{s\neq a} Q_s \cdot \Psi^0(R^0),\] and thus 
\begin{eq}{eq:t2sum} T^2 = T^2_G = \sum_{a \in W} T^2_{G_a}.\end{eq}
\end{proof}

\paragraph{Step 6: computing $T^2_{G_a}$.} The proof of Lemma~\ref{lem:ratios} will be complete once we compute $T^2_{G_a}$ and show that the sum in \eqref{eq:t2sum} cannot be zero. 

However, computing $T^2_{G_a}$ is easy since we can explicitly compute the dependence of the $\lambda_i$s on the $Q_i$s. Let us now use $M$ to denote $M_{G_a}$. This dependence is rather simple and allows for a direct computation of $M_{i,j}$, and through that $T_{G_a}$.  We will focus on $a=n$ without loss of generality.

From \eqref{eq:Fdet} we see that $F_{G_1}(x,Q) = 1/2\cdot(x^2 + Q_n x -2)\prod_{i=1}^{n-1} (x+Q_i)$. The roots of this polynomial are $\lambda_i = -Q_i$ for $ i = 1,\dots,n-1$, and $\lambda_{n/n+1} = (-Q_n \pm \sqrt{Q_n^2+8})/2$.
Thus for $j < n$ 
\[ M_{i,j} = F_j(\lambda_i,Q) = \left\{
\begin{array}{lll} 
0 &:& i \neq j \\
1/2 (Q_i^2 - Q_n Q_i - 2) \prod_{s\neq i} (Q_s - Q_i) &:& i = j
\end{array} \right. 
\]
Similarly
\begin{align*}
M_{n,n} = F_n(\lambda_n,Q) =&1/2\cdot \lambda_n \prod_{i=1}^{n-1} (\lambda_n+Q_i) \\
M_{n+1,n} = F_n(\lambda_{n+1},Q) = &1/2\cdot \lambda_{n+1} \prod_{i=1}^{n-1}(\lambda_{n+1}+Q_i) \\
M_{n,n+1} = \lambda_n F'(\lambda_n,Q) =& 1/2\cdot\lambda_n (2\lambda_n+Q_n) \prod_{i=1}^{n-1} (\lambda_n+Q_i)\\
M_{n+1,n+1} = \lambda_{n+1} F'(\lambda_{n+1},Q) =&1/2\cdot \lambda_{n+1}(2\lambda_{n+1}+Q_n) \prod_{i=1}^{n-1} (\lambda_{n+1}+Q_i)\\
M_{n,n}M_{n+1,n+1} - M_{n,n+1} M_{n+1,n} =& 1/2\cdot \lambda_n \lambda_{n+1}(\lambda_{n+1} - \lambda_{n})\prod_{i=1}^{n-1} (\lambda_n+Q_i)(\lambda_{n+1}+Q_i) 
\end{align*}
A simple computation, using liberally that $\lambda_i = -Q_i$ for $i\leq n-1$,  then yields
\begin{multline*} \det M = \prod_{i=1}^{n-1} F_i(\lambda_i,Q) \cdot \left(M_{n,n}M_{n+1,n+1} - M_{n,n+1} M_{n+1,n}\right)= \\= \frac{\lambda_n \lambda_{n+1}(\lambda_{n+1}-\lambda_{n})}{2^{n+1}}\prod_{i=1}^{n-1} (Q_i^2-Q_nQ_i -2) \prod_{1 \leq i < j < n} (\lambda_j-\lambda_i)(Q_j-Q_i) \prod_{i=1}^{n-1}(\lambda_n - \lambda_i)(\lambda_{n+1}-\lambda_i) =\\= \Lambda \frac{\lambda_n \lambda_{n+1}}{2^{n+1}}\prod_{i=1}^{n-1} (Q_i^2-Q_nQ_i -2) \prod_{1\leq i < j < n}(Q_j - Q_i).
\end{multline*}
Thus, since $\lambda_n \lambda_{n+1} = 2$, we get
\[ T = \Psi\left(\frac{\det M}{\Lambda}\right) = \frac{1}{2^n} \prod_{i=1}^{n-1} (Q_i^2-Q_i Q_n -2) \prod_{1\leq i < j < n}(Q_j-Q_i)\]
This polynomial should have degree $n(n+1)/2 +1$ but as we have seen, its actual degree is $n(n+1)/2 -1$, so 
\[ T^2_{G_a} = \frac{\pm 1}{2^n} \frac{1}{Q_a} \prod_{i=1}^n Q_i \prod_{1\leq i < j \leq n} (Q_j-Q_i).\]
Hence \[ T^2_G = \sum_{a\in W} \frac{\pm 1}{Q_a}  \prod_{i=1}^n Q_i \prod_{1 \leq i < j \leq n}(Q_j-Q_i),\] and this is clearly not 0. This completes the proof of Lemma~\ref{lem:ratios}.\qed

\section{Graph Products}
Let $G_1\Box G_2$ denote the cartesian product of graphs $G_1$ and $G_2$, that is, $V(G_1\Box G_2) = V(G_1)\times V(G_2)$ and $E(G_1\Box G_2) = \{ \{(u,v),(x,y)\} : u=x\ and\ u\sim y \ or\ v=y\ and\ u\sim x\}$.  It is well known that the adjacency matrix for the cartesian product is given by 
\[
A(G_1\Box G_2) = A(G_1)\otimes I + I \otimes A(G_2)
\]
where $\otimes$ denotes the Kronecker product of matrices. A well-known fact about Kronecker products that will be of use to us is that
\begin{eq}{eq:tensor}
(A\otimes B)(C\otimes D) = AC \otimes BD
\end{eq}
for any matrices $A,B,C,D$ for which the products are defined.  An immediate consequence is the well-known fact that if $\lambda$ is an eigenvalue of $A(G_1)$ with eigenvector $\phi$ and $\mu$ and eigenvalue of $A(G_2)$ with eigenvector $\psi$, then $\lambda+\mu$ is an eigenvalue of $A(G_1\Box G_2)$ with eigenvector $\phi\otimes \psi$.  

In \cite{godsil}, it is shown that if perfect state transfer without potential occurs on $G_1$ and $G_2$ at the same time, then it occurs for the product.  In this section, we will show that the same holds in the presence of a potential.  Our proof is essentially the same as in \cite{godsil}, the only thing that needing to be decided is how to define the potential on the product.

\begin{theorem}\label{thm:product}
Let $G_1,G_2$ be graphs with potentials and $Q_1,Q_2$ respectively.  Then if perfect state transfer occurs from $u$ to $v$ at time $t$ in $G_1$, and from $x$ to $y$ at the same time $t$ in $G_2$, the we have perfect tunneling from $(u,x)$ to $(v,y)$ at the same time $t$ in the graph product $G_1\Box G_2$ with the potential $Q$ given by $Q((u,x),(u,x)) = Q_1(u,u) + Q_2(x,x)$. 
\end{theorem}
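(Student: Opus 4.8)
The plan is to use the Kronecker product structure to factor the time-evolution operator and then invoke the simple characterization that perfect state transfer from $a$ to $b$ at time $t$ is equivalent to $|U(t)_{a,b}| = 1$. First I would record that with the proposed potential $Q$, the Hamiltonian of the product splits as a Kronecker sum, namely
\[ H(G_1 \Box G_2) = H_1 \otimes I + I \otimes H_2, \]
where $H_1 = A(G_1) - Q_1$ and $H_2 = A(G_2) - Q_2$. This is exactly analogous to the adjacency-matrix identity $A(G_1 \Box G_2) = A(G_1)\otimes I + I \otimes A(G_2)$ quoted above, with the diagonal potential term distributing correctly since $Q((u,x),(u,x)) = Q_1(u,u) + Q_2(x,x)$ means the potential matrix is $Q_1 \otimes I + I \otimes Q_2$.

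The key step is to exponentiate this Kronecker sum. Because $H_1 \otimes I$ and $I \otimes H_2$ commute (their product in either order is $H_1 \otimes H_2$ by \eqref{eq:tensor}), the matrix exponential factors as
\[ U(t) = e^{it(H_1\otimes I + I \otimes H_2)} = e^{it H_1} \otimes e^{it H_2} = U_1(t) \otimes U_2(t), \]
where $U_1, U_2$ are the time-evolution operators on $G_1, G_2$. I would justify the middle equality either by appealing to the general fact that $e^{X+Y} = e^X e^Y$ for commuting $X,Y$ together with \eqref{eq:tensor}, or by directly expanding the power series and using that $(H_1\otimes I)^a (I \otimes H_2)^b = H_1^a \otimes H_2^b$.

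Once the factorization is in hand, the conclusion is immediate from the entrywise criterion. The relevant entry of $U(t)$ is indexed by the pair $((u,x),(v,y))$, and by the Kronecker product structure this entry equals the product of the corresponding entries of the factors:
\[ U(t)_{(u,x),(v,y)} = U_1(t)_{u,v} \cdot U_2(t)_{x,y}. \]
Since perfect state transfer occurs from $u$ to $v$ in $G_1$ at time $t$ and from $x$ to $y$ in $G_2$ at the same time $t$, we have $|U_1(t)_{u,v}| = 1$ and $|U_2(t)_{x,y}| = 1$, hence $|U(t)_{(u,x),(v,y)}| = 1$. By the remark in the preliminaries that perfect state transfer from $a$ to $b$ at time $T$ holds if and only if $|U(T)_{a,b}| = 1$, this establishes perfect state transfer from $(u,x)$ to $(v,y)$ at time $t$ in $G_1 \Box G_2$.

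There is no serious obstacle here; the argument is essentially bookkeeping once the potential on the product is chosen to make the Hamiltonian a Kronecker sum. The only point requiring a line of care is verifying that the two summands commute so that the exponential splits, which is where \eqref{eq:tensor} does the real work; this is precisely the place where the choice of potential $Q_1 \otimes I + I \otimes Q_2$ is forced, since any other diagonal potential would spoil the Kronecker sum structure and hence the clean factorization of $U(t)$.
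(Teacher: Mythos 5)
Your proposal is correct, and its skeleton matches the paper's: both identify $H = H_1 \otimes I + I \otimes H_2$ and reduce everything to the factorization $U(t) = U_1(t) \otimes U_2(t)$ (the paper's Lemma~\ref{lem:prod}). The differences lie in the two justifications. For the factorization, you argue algebraically: $H_1 \otimes I$ and $I \otimes H_2$ commute by \eqref{eq:tensor}, so the exponential of their sum splits, and $e^{it(H_1 \otimes I)} = e^{itH_1} \otimes I$ by expanding the power series. The paper instead diagonalizes: it uses the eigenpairs $(\lambda+\mu,\ \phi_\lambda \otimes \psi_\mu)$ and factors the spectral sum over $\lambda,\mu$ into a Kronecker product. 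Your route is more elementary, needing no spectral decomposition, and it sidesteps a notational wrinkle in the paper's displayed sum, where the rank-one projectors $(\phi_\lambda\phi_\lambda^T) \otimes (\psi_\mu\psi_\mu^T)$ are abbreviated to $\phi_\lambda \otimes \psi_\mu$. For the conclusion, you use the entrywise criterion $|U(t)_{(u,x),(v,y)}| = U_1(t)_{u,v}\,U_2(t)_{x,y}$ in absolute value, whereas the paper tracks indicator vectors via $U(t)\1_{(u,x)} = (U_1(t)\1_u) \otimes (U_2(t)\1_x) = \gamma_1\gamma_2\,\1_{(v,y)}$; these are equivalent, since the preliminaries note that perfect state transfer at time $T$ is the same as $|U(T)_{u,v}|=1$ (unitarity forces the other entries of that column to vanish). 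One small caveat: your closing claim that the choice $Q = Q_1 \otimes I + I \otimes Q_2$ is \emph{forced} overstates things---other diagonal potentials are not thereby ruled out from supporting perfect state transfer; they merely fail to admit this particular factorization argument---but this is a side remark, not a step on which your proof depends.
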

\begin{proof}
Let $H_1 = A(G_1) - Q_1$ and $H_2 = A(G_2) - Q_2$ be the Hamiltonians, and denote by $H = A(G_1\Box G_2) - Q$ the Hamiltonian for the product, with $Q$ defined as in the statement of the theorem.  Note that it is clear that $Q = Q_1\otimes I + I \otimes Q_2$, and we observed above that $A(G_1\Box G_2) = A(G_1)\otimes I + I\otimes A(G_2)$, so that $H = H_1\otimes I + I \otimes H_2$.  Thus, the eigenvalues and eigenvectors of $H$ are given by the eigenvalues and eigenvectors of $H_1$ and $H_2$ in the same way as the adjacency matrix.  Define $U_1(t) = e^{itH_1}$, $U_2(t) = e^{itH_2}$, and $U(t) = e^{itH}$.
\begin{lemma}\label{lem:prod}
$U(t) = U_1(t) \otimes U_2(t)$.
\end{lemma}
\begin{proof}
As observed above, the eigenvalues of $H$ are all numbers of the form $\lambda+\mu$ where $\lambda$ ranges over the eigenvalues of $H_1$ and $\mu$ the eigenvalues of $H_2$.  Let $\phi_\lambda$ denote the eigenvector of $H_1$ for $\lambda$, and $\psi_\mu$ the eigenvector of $H_2$ for $\mu$.  Then $\phi_\lambda\otimes \psi_\mu$ is the eigenvector of $H$ for $\lambda+\mu$. Then by properties of Kronecker products, we have
\begin{align*}
U(t) &= \sum_{\lambda,\mu}e^{it(\lambda+\mu)}\phi_\lambda\otimes \psi_\mu \\
&= \sum_{\lambda,\mu} \left(e^{it\lambda}\phi_\lambda\right)\otimes \left(e^{it\mu}\psi_\mu\right)\\
&= \left(\sum_\lambda e^{it\lambda}\phi_\lambda\right)\otimes\left(\sum_\mu e^{it\mu}\psi_\mu\right)\\
&= U_1(t) \otimes U_2(t)
\end{align*}
which gives the lemma.
\end{proof}
With this, we can finish the proof of the theorem.  Since we are assuming perfect state transfer from $u$ to $v$ at time $t$ in $G_1$, and from $x$ to $y$ in $G_2$ at the same time, we have
\begin{align*}
U_1(t)\mathbf 1_u &= \gamma_1\mathbf 1_v\\
U_2(t)\mathbf 1_x &= \gamma_2\mathbf 1_y
\end{align*}
where $|\gamma_i|=1$.  Finally, it is clear that $\mathbf 1_{(u,x)} = \mathbf 1_u \otimes \mathbf 1_x$.  Thus, letting $\gamma = \gamma_1\gamma_2$ (so $|\gamma|=1$), we see from Lemma \ref{lem:prod} and (\ref{eq:tensor}) that
\begin{align*}
U(t)\mathbf 1_{(u,v)} &= \left(U_1(t)\otimes U_2(t)\right)\left(\mathbf 1_u\otimes \mathbf 1_x\right)\\
&= U_1(t)\mathbf 1_u \otimes U_2(t)\mathbf 1_x\\
&= \gamma_1\mathbf 1_v \otimes \gamma_2\mathbf 1_y\\
&= \gamma \mathbf 1_{(v,y)}.
\end{align*}
This completes the proof of the theorem.
\end{proof}

In the previous section, we saw that perfect state transfer occurs in graph which have two vertices that share identical neighborhoods.  This condition is somewhat restrictive, and it is natural to ask if we can produce other examples that do not satisfy this restriction.  Indeed we can, using Theorem \ref{thm:product}, if we take the cartesian product of a graph (with its potential) with itself, then it is possible that the two vertices between which tunneling occurs do not share a neighborhood.  Indeed, taking the product of $P_3$ with itself any number of times gives such an example.

\bibliographystyle{plain}
\bibliography{quantum}

\end{document}